\newtheorem{thm}{Theorem}
\newtheorem{lem}{Lemma}
\newtheorem{prop}{Proposition}
\def\bit{\begin{itemize}}
	\def\eit{\end{itemize}}
\def\bdes{\begin{description}}
	\def\edes{\end{description}}
\def\beq{\begin{equation}}
\def\eeq{\end{equation}}
\def\ben{\begin{enumerate}}
	\def\een{\end{enumerate}}
\def\beqar{\begin{eqnarray}}
\def\eeqar{\end{eqnarray}}
\def\beqarr{\begin{eqnarray*}}
	\def\eeqarr{\end{eqnarray*}}
\def\ZZ{{\mathbb Z}}       
\def\RR{{\mathbb R}}  
\def\NN{{\mathbb N}}
\newcommand{\po}{\left(}
\newcommand{\pf}{\right)}
\newcommand{\R}{\mathbb R}
\newcommand{\dd}{\text{d}}
\begin{document}
 	\title{\textbf{Strongly self-interacting processes on the circle}}
 	\author{Carl-Erik Gauthier\footnote{Department of Mathematics, Box 354350, University of Washington, Seattle, WA 98195, United States. Email:
 			carlgaut@uw.edu} and Pierre Monmarch\'e\footnote{Sorbonne Universit\'e, 4 place Jussieu, 75 005 Paris, France. Email:
 			pierre.monmarche@upmc.fr}}
 	\maketitle
 	\date{}

\begin{abstract}
The purpose of this paper is to investigate the long time behaviour for a self-interacting diffusion   and a self-interacting velocity jump process. While the diffusion case has already been studied for some particular potential function, the second one, which belongs to the family of piecewise deterministic processes, is new.
 		     
 	Depending on the underlying potential function's shape, we prove either the almost sure convergence or the recurrence for a natural extended process given by a change a variable.\bigskip

\end{abstract}

\section{Introduction}

Our aim is to study the effect of the addition of a self-interaction mechanism to two initially Markovian dynamics. The first one is the classical Fokker-Planck diffusion $X\in\mathbb R$ that solves the SDE
\begin{eqnarray}\label{EqIntro}
d X_t &=& d B_t - V'(X_t) dt,
\end{eqnarray}
where $(B_t)_{t\geqslant 0}$ is a standard Brownian motion on $\mathbb{R}$. Namely $X$ is the Markov process with generator
\begin{eqnarray*}
L f(x) &=&   \frac12 f''(x) -   V'(x)  f'(x).
\end{eqnarray*}
We recall that the generator of a Markov process $(Z_t)_{t\geq 0}$ is formally defined by
\begin{eqnarray*}
L f(z) &=& (\partial_t)_{| t=0}\ \mathbb E\po f(Z_t)\ |\ Z_0 = z\pf.
\end{eqnarray*}
Equation \eqref{EqIntro} can be seen as the generalization of the classical Ornstein-Uhlenbeck process, obtained for $V'(x)=\lambda x,\, \lambda>0$.

The second one is the velocity jump process $(X,Y)\in\mathbb R\times \{-1,1\}$ which is the piecewise deterministic Markov process (PDMP) introduced in \cite{MonmarchePDMP} with generator
\begin{eqnarray*}
Lf(x,y) &=& y \partial_x f(x,y) + \po \lambda + \po y V'(x) \pf_+\pf \po f(x,-y) - f(x,y)\pf
\end{eqnarray*} 
where $\lambda >0$ is constant and $(\ )_+$ denotes the positive part. A trajectory of the process is defined as follows: starting from an initial state $(x,y)$, the process follows the deterministic flow $(X_t,Y_t)=(x+ty,y)$ up to a random time  $T$ with cumulative distribution $\mathbb P(T>s) = \exp[-\lambda  s + \int_0^s  ( y V'(x+uy))d u]$. At time $T$, the velocity is reversed, i.e. $Y_T = -y$, while the position is continuous, i.e. $X_T = x + T y$. By the Markov property, $(X_T,Y_T)$  can then be taken as a new initial state, from which the process again follows free transport up to a new random jump time, etc., and the full trajectory is defined by induction (see \cite{MonmarchePDMP} and Section \ref{SectionHittingPDMP} for details).

In both cases (diffusion or PDMP), if we suppose that the potential $V$ is sufficiently coercive at infinity, $X$ is ergodic and its law converges to the Gibbs measure with density proportional to $e^{-V}$. Note that when the rate of jump $\lambda$ goes to infinity and time is correctly accelerated, the velocity jump process (more precisely its first coordinate) converges to the Fokker-Planck diffusion (see \cite{FGM}).

In both cases we want to replace the potential $V(X_t)$ by a self-interacting potential
\begin{eqnarray*}
V_t(X_t) &=& \int_0^t W(X_t,X_s) d s
\end{eqnarray*}
where $W$ is a symmetric interaction potential. In other words $V_t(X_t)$ depends both on the current position $X_t$ and the (non-normalized) occupation measure $\int_0^t \delta_{X_s} ds$. This is a strong self-interaction, by contrast with the weak self-interaction such as studied in \cite{BLR} where the self-interacting potential is a function of $X_t$ and of the normalized occupation measure $\frac1t\int_0^t \delta_{X_s} ds$.

 Self-Interacting processes belong to the family of \textit{path-dependent} processes. The particularity of such processes is their lack of Markov property since the past modifies the environment that drives the particle. New phenomena may arise in their long time behavior, which would be impossible without the path-dependency. 

A first example of strong self-interaction is  the linear one, that correspond to $W(x,y) = \frac12(x-y)^2$. M.Cranston and Y.Le Jan proved in 1995 (see \cite{cranston}) that the solution of the SDE
 \begin{equation}\label{eq1INTRO}
 dX_t =dB_t -\left(\int_0^t (X_t - X_s) ds\right)d t
 \end{equation}
almost surely converges to a Gaussian random variable as $t$ goes to infinity.  Later, S.Herrmann and B.Roynette extended this result to a broader class of potentials of the form $W(x,y) = V(x-y)$ with $V$ convex (see \cite{HerrRoy}). In the case of the circle, the first author obtained the same result (almost sure convergene toward a random variable) for the interaction potential $W(x,y)=-\cos(x-y)$ (see \cite{G}). In all these cases the particle is attracted by its past.  

 In \cite{CEGMB}, M.Bena\"im and the first author considered the repulsive case, in which the particle is repelled by its past trajectory. More precisely they studied a self-repelling diffusion on a compact manifold where $W$ can be decomposed as
 \begin{eqnarray*} 
  W(x,y) &=& \sum_{i=1}^n a_i e_i(x)e_i(y) 
 \end{eqnarray*}
with the $a_i$'s being positive numbers and the $e_i$'s being eigenfunctions of the Laplace operator on the manifold. The basic example on the circle would be $W(x,y) = \cos(x-y) = \cos(x)\cos(y)+\sin(x)\sin(y)$. This assumption on the $e_i$'s yields an explicit formula for the invariant measure of the Markov process $\po X_t,\po\int_0^t e_i(X_s) ds\pf_{i=1..n}\pf$. 

The aim of the present work is to investigate the case where the $e_i$'s are not eigenfunctions of the Laplace operator. On the other hand we restrict the study (in dimension 1) to the case $n=1$, namely we take a potential of the form
\[W(x,y) = F(x)F(y)\]
with moreover $F$ smooth and $2\pi$-periodic, so that we consider $x\in \mathbb{S}^1=\RR /2\pi\ZZ$. Following \cite{CEGMB}, we set
\begin{eqnarray}\label{U}
U_t  & = & \int_0^t{F(X_s)}d s,
\end{eqnarray} which reduces the study of the non-Markovian process to the study of some Markov process on an extended space. This restriction should be seen as a first step toward the analysis of the more general situation.

As a consequence, in this paper we study the Markov processes $(X,U)$ on $\mathbb S^1\times\RR$ and $(X,U,Y)$ on $\mathbb S^1\times \RR\times\{-1,1\}$ with respective generators
\begin{eqnarray}\label{MODELEdiff}
L_1 f(x,u) &=& \frac12 \partial_x^2 f(x,u) - u F'(x)\partial_x f(x,u) + F(x) \partial_u f(x,u)\end{eqnarray} 
and 
\begin{eqnarray}\label{MODELPDMP}
\lefteqn{L_2 f(x,u,y) =}\notag \\ && y \partial_x f(x,u,y)+ F(x) \partial_u f(x,u,y) + \po \lambda + \po y u F'(x) \pf_+\pf \po f(x,u,-y) - f(x,u,y)\pf.
\end{eqnarray}
In both cases we call $X$ the position, $U$ the auxiliary variable and, in the case of the velocity jump process, $Y$ the velocity. Remark that \eqref{U} would imply that $U_0$ is always 0, but from  now on we consider the general case of the processes with generators \eqref{MODELEdiff} and \eqref{MODELPDMP} with any initial condition $U_0\in\mathbb R$. The following assumptions are supposed to hold throughout all the paper:
\begin{itemize}
\item The function $F:\mathbb S^1 \rightarrow\RR$  is non-constant, $\mathcal C^\infty$, changes signs, and $F'(x)=0$ implies $F(x)\neq 0$. Moreover for all $x\in\mathbb S^1$ there exists $k\geq 1$ such that $F^{(k)}(x)\neq 0$. In particular the critical points of $F$ are isolated points.
\end{itemize}
The assumption that $F$ has no critical point $x$ with $F(x)=0$ and is nowhere flat is made for simplicity: otherwise, different behaviours may arise and many cases would have to be distinguished. We concentrate here on the generic case. Throughout this paper, we consider the discrete sets
\begin{eqnarray*}
M(F,+)&=&\{x\in \mathbb{S}^1\mid x \text{ is a local maximum of $F$ and } F(x)>0 \}\\
M(F,-)&=&\{x\in \mathbb{S}^1\mid x \text{ is a local maximum of $F$ and } F(x)<0 \}\\
m(F,+)&=&\{x\in \mathbb{S}^1\mid x \text{ is a local minimum of $F$ and } F(x)>0 \} \\
m(F,-)&=& \{x\in \mathbb{S}^1 \mid x \text{ is a local minimum of $F$ and } F(x)<0 \}
\end{eqnarray*}  
and $\mathcal M = M(F,-)\cup m(F,+)$. Recall the total variation distance between two probability laws $\mu$ and $\nu$ is
\begin{eqnarray*}
d_{TV}\po \mu,\nu\pf & =& \inf\left\{ \mathbb P\po \Xi_1\neq \Xi_2\pf \, : \ Law(\Xi_1)=\mu,\ Law(\Xi_2)=\nu\right\}
\end{eqnarray*}
and a measure $\mu$ is said invariant for a Markov process $(Z_t)_{t\geq 0}$ if $\{Law(Z_0) = \mu\}$ implies $\{\forall t\geq 0, Law(Z_t) = \mu\}$.
We say that the law of $(Z_t)_{t\geq 0}$ converges exponentially fast to $\mu$ in the total variation sense if there exist $C,\rho>0$, that may depend on the law of $Z_0$, such that for all $t\geq 0$
\[ d_{TV}\po Law(Z_t),\mu\pf \leq C e^{-\rho t}.\]
Finally, we say that a random variable $Z$ \textit{admits an exponential moment}  if there exists $\theta > 0$ such that \[ \mathbb{E}(e^{\theta \vert Z\vert}) <\infty.\]
Our main result is the following:

\begin{thm}\label{ThmPrincipal}\

\begin{enumerate}
\item If $\mathcal M = \emptyset$, then each of the the processes $(X,U)$ with generator \eqref{MODELEdiff} and $(X,U,Y)$ with generator \eqref{MODELPDMP} admits a unique invariant measure with full support. If the law of $U_0$ admits an exponential moment then the process converges exponentially fast in the total variation distance sense to this invariant measure.
\item  If $\mathcal M \neq \emptyset$, then, in both cases, the position $X_t$ almost surely converges to a point of $\mathcal M$, as $t$ goes to infinity. Any point of $\mathcal M$ has a positive probability to be the limit of $X$.
\end{enumerate}
\end{thm}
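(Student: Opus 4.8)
The plan is to analyze the extended Markov process $(X,U)$ (resp. $(X,U,Y)$) by treating $U$ as a slowly-varying parameter and studying the fast dynamics of $X$ (resp. $(X,Y)$) at frozen $U=u$. For fixed $u$, the $X$-dynamics (resp. $(X,Y)$-dynamics) is an ergodic Markov process on the compact space $\mathbb S^1$ (resp. $\mathbb S^1\times\{-1,1\}$) with invariant measure $\mu_u$ proportional to $e^{-2uF(x)}dx$ (resp. the analogous measure coming from the PDMP, which by the results of \cite{MonmarchePDMP} has the same $x$-marginal). Then $\dot U_t = F(X_t)$ has, on the fast timescale, an average drift
\begin{eqnarray*}
h(u) &=& \int_{\mathbb S^1} F(x)\,\mu_u(dx)\ =\ \frac{\int F(x)e^{-2uF(x)}dx}{\int e^{-2uF(x)}dx}\ =\ -\frac{1}{2}\frac{d}{du}\log\int e^{-2uF(x)}dx.
\end{eqnarray*}
The sign structure of $h$ drives everything: $h$ is the derivative of a strictly convex function (log-Laplace transform), hence strictly decreasing, and $h(u)\to -\max F<0$ as $u\to+\infty$, $h(u)\to -\min F>0$ as $u\to-\infty$ when $F$ changes sign. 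So $h$ has a unique zero $u^\star$, and the averaged flow $\dot u = h(u)$ pushes $U$ toward $u^\star$. This suggests that when $\mathcal M=\emptyset$ the process is recurrent/ergodic (case 1), while the nontrivial attracting behavior of case 2 must come from the fact that when $\mathcal M\ne\emptyset$ the averaging picture breaks down near certain points: if $X$ sits near $x_0\in M(F,-)$ (a local max of $F$ with $F(x_0)<0$) then $\dot U<0$ near $x_0$ pushes $u\to-\infty$, which in turn steepens the well $e^{-2uF}$ precisely at $x_0$, trapping $X$ there — a genuine self-reinforcing mechanism; symmetrically for $m(F,+)$.

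For \textbf{part 1 ($\mathcal M=\emptyset$)} I would establish a Lyapunov/Foster-type drift condition for the extended generator. Take $W(x,u) = \varphi(u)$ with $\varphi$ growing like $e^{\theta|u|}$ for small $\theta>0$ (matching the exponential-moment hypothesis on $U_0$); then $L_i W = \varphi'(u)F(x)$, and after a short time the empirical behaviour of $F(X)$ is controlled by $h(u)$, which is bounded away from $0$ with the right sign for $|u|$ large, giving $L_i W \le -cW + b\mathbf 1_{\{|u|\le R\}}$ in a suitable averaged/hypoelliptic sense. To make this rigorous one combines: (i) a quantitative ergodicity estimate for the frozen process, uniform in $u$ over compacts, with exponential contraction in total variation — for the diffusion via hypoellipticity (Hörmander's condition holds since $F$ is non-constant and the brackets of $\partial_x$ and $uF'\partial_x - F\partial_u$ span), for the PDMP via the coupling/Doeblin arguments of \cite{MonmarchePDMP}; (ii) the fact that the set $\{|u|\le R\}\times\mathbb S^1$ is a small/petite set. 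Harris' theorem then yields a unique invariant probability measure with full support (full support because the dynamics is irreducible on the whole space) and exponential convergence in total variation; the exponential moment on $U_0$ is exactly what is needed for $W(Z_0)<\infty$ to propagate.

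For \textbf{part 2 ($\mathcal M\ne\emptyset$)}, I would show first that $U_t$ is almost surely unbounded — more precisely that on the event $\{X \text{ eventually stays near } x_0\in M(F,-)\}$ one has $U_t\to-\infty$, and conversely that $U_t$ can only diverge to $\mp\infty$ if $X$ concentrates near $M(F,-)$ resp. $m(F,+)$. The key estimate is that once $|U_t|$ is large, the potential $x\mapsto U_t F(x)$ has deep wells at the points of $\mathcal M$ and the frozen process equilibrates at rate $\sim e^{c|U_t|}$ near one such well, far faster than $U$ moves, so $X$ gets locked in. To produce the convergence I would identify a (semi)martingale: roughly, along trajectories where $X$ stays in a neighborhood $\mathcal N(x_0)$ of $x_0\in M(F,-)$, the pair $(X_t,U_t)$ after a logarithmic time-change behaves like a fast ergodic diffusion with a drift in $U$ that is strictly negative and bounded away from $0$, so $U_t\to-\infty$ linearly and, by a Borel–Cantelli / large-deviation escape-rate argument, $X$ never leaves $\mathcal N(x_0)$; since the $\mathcal N(x_0)$ shrink as $|U|$ grows, $X_t$ converges to $x_0$ itself. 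That some trajectory does enter and stay is where the sign change of $F$ and the nondegeneracy assumption $F'(x)=0\Rightarrow F(x)\ne 0$ (so the wells at $\mathcal M$ are genuine, quadratic or higher-order but isolated) are used; positivity of the probability of each $x_0\in\mathcal M$ being the limit follows from irreducibility of the extended process — it can reach a state with $U$ very negative and $X$ near $x_0$, from which the trapping estimate gives a positive lower bound on the probability of staying forever.

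\textbf{The main obstacle} I expect is making the averaging/trapping rigorous near the degenerate points: the separation of timescales between $X$ (fast, mixing at rate $e^{c|U|}$) and $U$ (slow) is not uniform — it degenerates exactly where the interesting behaviour happens — so one cannot quote a black-box stochastic averaging theorem. Controlling the escape probability from the shrinking neighborhoods $\mathcal N(U_t)$ requires careful comparison arguments (Lyapunov functions adapted to the moving well, or explicit one-dimensional diffusion comparisons after the time change), and in the PDMP case the lack of a spatial diffusive term means the "equilibration near a well" has to be extracted from the hypocoercive structure of $L_2$ rather than from a maximum principle, which is the technically heaviest part.
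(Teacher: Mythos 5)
Your outline of part 2 is essentially the paper's strategy: a trapping estimate near a point of $\mathcal M$ once $|U|$ is large (escape probability per attempt decaying exponentially in $|U|$, with $|U|$ growing linearly while trapped, summed via Borel--Cantelli over shrinking neighborhoods), combined with a return-to-compacts and irreducibility argument to show the process almost surely reaches a trapping configuration. You correctly identify the quantitative control of escape from the moving well, and the absence of a diffusive term in the PDMP case, as the technical core; the paper carries this out through explicit hitting-time bounds (comparison with a frozen diffusion via scale functions and Ikeda--Watanabe, and a direct one-dimensional analysis for the PDMP) and a carefully tuned sequence of scales $\eta_j\sim \ln j/j$.

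Part 1, however, contains a genuine gap. Your drift condition rests on the claim that ``after a short time the empirical behaviour of $F(X)$ is controlled by $h(u)=\int F\,d\mu_u$,'' i.e.\ that the frozen dynamics at level $u$ equilibrates to its Gibbs measure $\mu_u$ fast compared to the motion of $U$. This is false precisely in the regime where you need it: for $|u|$ large the frozen process is metastable whenever $F$ has more than one local minimum, with mixing time of order $e^{c|u|}$, while $U$ moves at speed $O(1)$; the timescale separation goes the wrong way. Worse, $h(u)\to \min F<0$ as $u\to+\infty$ (not $-\max F$, by Laplace asymptotics) \emph{regardless} of whether $\mathcal M$ is empty, so if your averaged drift argument were valid it would prove positive recurrence even when $\mathcal M\neq\emptyset$, contradicting part 2. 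The correct mechanism --- and the one the paper uses --- is purely local and never invokes $\mu_u$: for $u>M$ large, $X$ falls into \emph{some} local minimum of $F$ in a time with an exponential moment uniform in $M$, and escapes only with probability $O(Me^{-\delta M})$ per attempt, each attempt costing a time bounded below; since $m(F,+)=\emptyset$ all local minima satisfy $F\leq -3\delta$, so the Lebesgue measure of $\{s\leq t: F(X_s)\geq-\delta\}$ is stochastically small and $U$ drifts down at rate $\delta$. This yields the Lyapunov bound for $e^{\kappa|U|}$ directly, after which your small-set/Harris step (hypoellipticity plus Stroock--Varadhan support for the diffusion, bracket condition and controllability from the PDMP literature) is indeed the right conclusion. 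To repair your argument you must replace the averaging against $\mu_u$ by this descent-into-wells estimate, which is exactly where the hypothesis $\mathcal M=\emptyset$ enters.
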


Before proceeding to its proof, let us mention why this result may be expected. Suppose that, at some time, $U>0$. Then, as long as $U$ is large enough, the force $U_t F'(X_t)$ tends to confine $X$ close to the minima of $F$. If these minima are all negative, while $X$ stays in their neighbourhood, $U$ decreases, up to some point where it becomes negative. From then the effect of the force is reversed, $X$ is attracted by the maxima of $F$, and the same mechanism comes into play with $U$ and $F$ changed to $-U$ and $-F$. In some sense $X$ and $U$ have then an inhibitory effect one on the other.

 On the other hand if $X$ falls in the neighborhood of a positive minimum of $F$ while $U>0$ (the case of a negative maximum with $U<0$ being symmetric) then, as long as it stays there, $U$ increases, which make it more and more unlikely for $X$ to escape away from the minimum, so that eventually there is a positive probability that $X$ never leaves and  $U$ goes to infinity. This is reminiscent of the annealing problem (see \cite{Royer} for the diffusion and \cite{MonmarchePDMP} for the velocity jump process) where $U_t$ is replaced by a deterministic $(\beta_t)_{t\geq 0}$, called the inverse temperature. It is classical that in this case, if $\beta$ increases faster than logarithmically then $X$ will eventually stay trapped forever in the cusp of a local minima. Yet, in our present case, as long as $X$ stays close to a positive minimum, $U$ increases linearly in time.

\bigskip

\noindent\textbf{Remarks :}	
\begin{enumerate}
	\item The particular form of the interacting potential $W(x,y) = F(x)F(y)$ implies that $W$ is a Mercer Kernel, which means the particle is repulsed by its past (see \cite{CEGMB}).
	
	If furthermore $\int_0^{2\pi}F(y)dy=0$, it has been shown in \cite[Theorem 2.13]{BR} that the normalized occupation measure $\frac{1}{t}\int_0^t \delta_{X_s}ds$ converges almost-surely to the uniform distribution on $\mathbb{S}^1$ whether or not $\mathcal{M}$ is empty in the weak self-interaction diffusion case. This is a major difference between strong and weak self-interaction.
	
	 We could also consider the case $W(x,y) = -F(x)F(y)$. Following the proof of Theorem \ref{ThmPrincipal}, it is not hard to see that in this case $X_t$ almost surely converges as $t$ goes to infinity to a point of $\mathcal M' = m(F,-)\cup M(F,+)$ which, as soon as $F$ is not constant and changes signs, is non-empty.
	\item If $F$ does not change signs, then, depending on the sign, $U_t$ converges either to $\infty$ or to $-\infty$ linearly fast. Therefore, Proposition \ref{PropHitting} and Proposition \ref{PropLocal} imply the almost-sure convergence of $X_t$ respectively either to a local minimum or to a local maximum of $F$.
\end{enumerate} 


We made the choice to write as much as possible notations, results and proofs which are common to both processes, isolating only the few  lemmas that deal with the specific technical difficulties of each case. Our arguments are based on bounds for some hitting times of the processes which are established in Section~\ref{SectionHitting}. From them we show in Section~\ref{SectionTemps} that, when $\mathcal M$ is empty,  the time for the processes to return to compact sets is short (i.e. it admits exponential moments). Section \ref{SectionDensite} is devoted to some uniform bounds of the transition kernel of the processes over compact sets, and Section \ref{SectionPreuve} to the proof of Theorem \ref{ThmPrincipal}.  

\textbf{Notation:} for $s\in\mathbb R$, $\lfloor s\rfloor = \max\{k\in\mathbb Z,\ k\leq s\}$ and $\lceil  s\rceil = \min\{k\in\mathbb Z,\ k\geq s\}$.


\section{Hitting times}\label{SectionHitting} 

In this section, for a redaction purpose, we will hide the dependency on $U$ of the evolution of $X$. More precisely we will consider the (inhomogeneous in time) diffusion 
\begin{eqnarray}\label{EqDiffInhomo}
d X_t &=& dB_t - g(t) F'(X_t) dt
\end{eqnarray}
where $g$ is a Lipschitz-continuous function and, similarly, the inhomogeneous PDMP $(X,Y)$ with generator
\begin{eqnarray}\label{EqPDMPInhomo}
L_t f(x,y) &=& y \partial_x f(x,y) + \po \lambda + \po g(t) y F'(x) \pf_+\pf \po f(x,-y) - f(x,y)\pf
\end{eqnarray}
where the generator of an inhomogeneous Markov process $Z$ is by definition
\[ L_t f(z) = (\partial_s)_{| s=0}  \mathbb E\po f(Z_{t+s}\mid Z_t = z\pf. \]
Note the processes considered in Theorem \ref{ThmPrincipal} are particular cases of those defined here.

Let $A=m(F,+)\cup m(F,-)$ be the set of minima of $F$, and $\delta \leq -\frac13\max\{ F(x)\; :\; x\in m(F,-)\} $ be positive and small enough so that
\begin{enumerate}
	\item[$\bullet$] for all $x\in A$, denoting by $I_x^\delta = [z_l,z_r]$ the connected component of $\{ F\leq F(x) + 2\delta\}$ containing $x$, then $F$ decreases on $[z_l,x]$ and increases on $[x,z_r]$.
	\item[$\bullet$] there exists $\kappa >0$ such that  for all $x\in A$ and $\eta\in [0,\delta]$,
			\begin{equation*}\label{preIm}
		d(x,B_x^{\eta})\geqslant \kappa \sqrt \eta,\; 
			\end{equation*} 
			where $ B_x^\eta = \{z\in I_x^\delta,\; F(z)=F(x)+\eta\}$.
\end{enumerate}
The existence of $\kappa$ follows from the fact that $F(x) \leq F(x_0) + \| F''\|_\infty (x-x_0)^2/2$ for all $x\in \mathbb S^1$ and $x_0 \in A$. Remark that the definition of $\delta$ ensures that for all $x\in m(F,-)$, $\{ F\leq F(x) + 2\delta\} \subset \{ F \leq - \delta\}$. Finally, for all $\eta\in [0,\delta]$, let
\[B^\eta = \bigcup_{x\in A} B_x^\eta\hspace{20pt}\text{and}\hspace{20pt} C^{\eta} = \po \bigcup_{x\in A} I_x^\eta\pf^c. \]
In other words $C^\eta$ is the complementary of a neighbourhood of the minima of $F$ and $B^\eta$ is a set of intermediary points from $A$ to $C^\eta$. 
These sets (for $\eta=\delta$) are represented in Fig.~\ref{FigureABC}. Note that the choice of $\delta$ ensures that if $\mathcal M=\emptyset$ then $C^{\delta}$ contains $\{ F\geq - \delta\}$.

\begin{figure}
\centering
\includegraphics[scale=0.5]{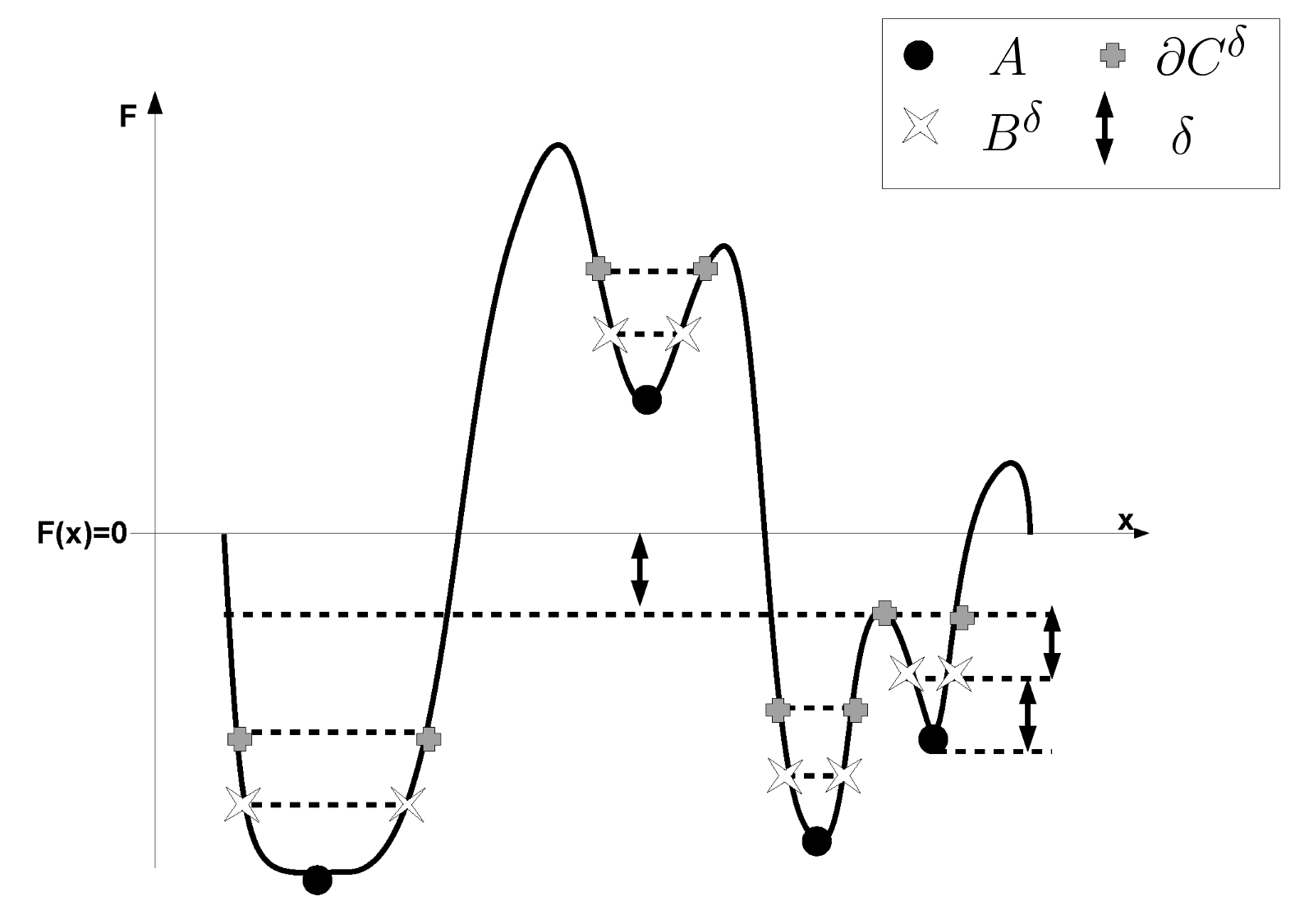}
\caption{Starting from a minimum in $A$, the process has to cross an intermediary point of $B^\delta$ halfway before reaching $C^\delta$. The energy level difference from $A$ to $B^\delta$, from $B^\delta$ to $C^\delta$ or (for a negative minimum) from $\partial C^\delta$ to $\{F=0\}$ is always at least $\delta$.}\label{FigureABC}
\end{figure}

For $x\in \mathbb S^1$ and $D\subset \mathbb S^1$ we write
\begin{eqnarray*}
T_{x\rightarrow D} &=& \inf\left\{ t\geq  0 \, :\,  X_t \in D\mid X_0 =x \right\}\\
q_{x\rightarrow D} &=& \mathbb P\po X \text{ reaches $D$ before $A$}\mid X_0 =x\pf
\end{eqnarray*}

For two real random variables $V,W$, recall that $V$ is said to be stochastically smaller than $W$, denoted by 
$V\overset{sto}\leq W$ (or equivalently $W\overset{sto}\geq V$), if for all $r\in\R$
\begin{eqnarray*}
	\mathbb P\po V>r\pf & \leq & \mathbb P\po W>r\pf.
\end{eqnarray*} 
If $V$ and $W$ have same law we write $V \overset{law}= W$.

The aim of this section is to prove the following:
\begin{prop}\label{PropHitting}
	There exist a constant $K > 1$ and non-negatives random variables $S$ and $R$ with an exponential moment   such that, for all $M>1$ and $\eta\in (0,\delta]$ with $M\eta>1$,  for all Lipschitz function $g\geq M$, if $X$ is defined by \eqref{EqDiffInhomo} or if $(X,Y)$ is defined by the generator \eqref{EqPDMPInhomo}, the following holds: 
\begin{eqnarray}
\forall x\in B^\eta,\hspace{32pt} q_{x\rightarrow C^\eta} & \leq & K M e^{-\eta M}\label{EqHittingProba}\\
\forall x\in A,\hspace{30pt} T_{x\rightarrow B^\eta} & \overset{sto}\geq & R  \eta\label{EqHittingR}\\
\forall x\in \mathbb S^1,\hspace{32pt} T_{x\rightarrow A} & \overset{sto}\leq & S .\label{EqHittingS}
\end{eqnarray}
\end{prop}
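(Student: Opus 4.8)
The plan is to treat the three estimates separately, since they capture different phenomena, and to handle the diffusion and PDMP cases in parallel whenever possible.

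For \eqref{EqHittingProba}, I would use a comparison argument based on the scale function of the one-dimensional motion. Starting from $x\in B^\eta$, the relevant escape event is reaching $C^\eta$ before returning to the minimum in $A$ that lies in the same well $I_x^\delta=[z_l,z_r]$. Fix such a well and consider the $X$-component confined to $[z_l,z_r]$; on this interval $F'$ has a definite sign (by the monotonicity property of $\delta$), and since $g\geq M$ the drift $-g(t)F'(X_t)$ pushes $X$ back toward the bottom of the well with strength proportional to $M$. For the diffusion, the hitting probability $q_{x\to C^\eta}$ is bounded by the ratio of scale-function increments; since the scale function has derivative $\sim \exp\bigl(2\int g(s)F'\bigr)$ and the energy barrier from $B^\eta$ up to $\partial C^\eta$ is at least $\eta$ (this is exactly the geometric content of the picture: $F$ must increase by $\eta$ from $B^\eta_x$ to reach the boundary of $I_x^\eta$), one gets a bound of the form $e^{-2\eta M}$ up to polynomial-in-$M$ corrections coming from the time-inhomogeneity of $g$ (here one must be a little careful: $g$ is not constant, so one controls $g(t)$ by $M$ from below but needs the Lipschitz bound to control it from above over the short time scale involved — this is where the factor $KM$ rather than a pure constant enters). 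For the PDMP the same idea works with the appropriate notion of scale function for the velocity jump process (the stationary density of the frozen-$g$ dynamics on the well), which again decays like $e^{-\eta M}$ across the barrier; this is essentially the content of the annealing estimates in \cite{MonmarchePDMP}. I expect this step to be the main obstacle, precisely because of the bookkeeping required to separate the $e^{-\eta M}$ gain from the $M$-polynomial loss and to do it uniformly over all admissible $g$, $M$, $\eta$, and over both models.

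For \eqref{EqHittingR}, the point is a \emph{lower} bound on the exit time of a neighborhood of a minimum, uniform in $g\geq M$. Starting at $x\in A$ and running until $X$ hits $B^\eta$, the displacement needed is at least $d(x,B_x^\eta)\geq \kappa\sqrt\eta$. The drift is \emph{restoring} (it points back toward $x$ while $X\in I_x^\delta$, since $g\geq M>0$), so adding it only makes the exit harder; hence $T_{x\to B^\eta}$ is stochastically bounded below by the exit time of a ball of radius $\kappa\sqrt\eta$ for the driftless process, i.e.\ a Brownian motion (resp.\ the free velocity jump process). By Brownian scaling the latter equals $\kappa^2\eta$ times the exit time of the unit ball, which is an almost-surely positive, $g$-independent random variable — call it $R$ — with an exponential moment (Gaussian tail for the diffusion; the free PDMP exit time of a fixed ball also has exponentially small tails since it travels at unit speed and flips at rate $\lambda$). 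One has to phrase the ``drift only helps'' step as a genuine coupling / comparison of one-dimensional diffusions (resp.\ a coupling of the PDMP with its $\lambda$-only version), which is standard.

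For \eqref{EqHittingS}, I want an \emph{upper} bound on the time to reach the set $A$ of all minima of $F$, uniform in $x$ and — crucially — in $g$. Here the key observation is that hitting $A$ does not require fighting the drift: from \emph{any} point, with $g\geq M>0$ the drift $-g(t)F'(X_t)$ points toward the nearest minimum whenever $F'\neq 0$, so it helps, while near the (isolated, non-degenerate in the weak sense $F^{(k)}\neq0$) critical points that are not minima the Brownian part alone pushes $X$ off in finite time. Concretely I would cover $\mathbb S^1$ by finitely many arcs, on each of which either the drift is favorably signed or one can cross by a pure Brownian excursion of bounded probability in bounded time, and then iterate: the number of arcs to traverse is finite, each traversal succeeds with probability bounded below in a time with exponential tails independent of $g$ (again using a comparison that discards the favorable drift and keeps only Brownian motion, resp.\ the free PDMP), so $T_{x\to A}$ is dominated by a geometric sum of such times, which has an exponential moment. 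The same construction works verbatim for the PDMP, replacing Brownian excursions by free ballistic excursions with rate-$\lambda$ velocity flips. Assembling: set $K$ large enough to absorb the constants from the first step, and take $S$, $R$ as the random variables produced in the last two steps.
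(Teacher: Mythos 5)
Your three-part architecture is essentially the paper's: a scale-function/comparison argument for \eqref{EqHittingProba}, comparison with the driftless process plus scaling for \eqref{EqHittingR} (and the deterministic speed-one bound for the PDMP), and an iterated ``geometric number of attempts'' covering argument for \eqref{EqHittingS}. However, your explanation of where the prefactor $KM$ in \eqref{EqHittingProba} comes from is wrong, and the mechanism you propose would actually destroy the uniformity the proposition requires. You say the polynomial-in-$M$ loss enters because one must control $g$ \emph{from above} over short time scales via its Lipschitz constant. But the bound has to hold uniformly over \emph{all} Lipschitz $g\geq M$ — the Lipschitz hypothesis is only there for well-posedness, and the Lipschitz constant is unbounded over that class — so no admissible estimate can depend on it. In fact no upper bound on $g$ is needed anywhere: the first step is a monotone comparison (Ikeda--Watanabe) of $X$ with the time-homogeneous diffusion $d\widetilde X=dB-MF'(\widetilde X)\,dt$ on the monotone branch of the well joining $A$ to $\partial C^\eta$; a stronger confining drift only decreases the escape probability, so one may freeze $g$ at the constant $M$. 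The factor $M$ then appears in the purely homogeneous scale-function computation, when bounding the denominator from below by $\int_{x_0}^{x_1}e^{2M(F(z)-F(x))}dz\geq \frac{1}{2M\Vert F'\Vert_\infty}\int_{x_0}^{x_1}2MF'(z)e^{2M(F(z)-F(x))}dz$, i.e.\ the cost of turning the integral into a perfect derivative — not from any time-inhomogeneity. For the PDMP the paper likewise does not use a ``scale function'' or stationary density but a first-step recursion for $\eta_x=\sup_{g\geq M}\mathbb P(\text{reach }(x_2,1)\text{ before }(x,-1))$, yielding $\eta_x\leq e^{-M(F(x_2)-F(x))}$, combined with a geometric control of the returns caused by the rate-$\lambda$ jumps (cost $e^{2\lambda\pi}$); your pointer to \cite{MonmarchePDMP} is the right reference, but the supremum over $g$ and the monotonicity in $g$ have to be built into the recursion, not into a stationary measure.

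Two smaller points. First, in \eqref{EqHittingR} for the diffusion your (correct) scaling gives $T_{x\rightarrow B^\eta}\overset{sto}\geq \kappa^2\eta\,\iota^1$, which does not literally dominate $R\sqrt\eta$ for a fixed $R$ uniformly in small $\eta$; you need to either state the bound with $\eta$ in place of $\sqrt\eta$ (which is all that is used downstream, where only the summability of the resulting series matters) or explain how you reconcile the exponents. Also, the assertion that ``the restoring drift only delays exit'' is not a naive pathwise comparison for a two-sided barrier: the drift can push the particle across the center to the other side. The correct implementation compares $\Theta_t=(X_t-x_0)^2$ with a squared Brownian motion via Ikeda--Watanabe, and this only gives the stochastic lower bound when the process starts at the minimum $x_0$ itself — which is your situation since $x\in A$, but it should be said. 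Second, for \eqref{EqHittingS} the step you leave implicit on the monotone arcs is the comparison $F(X_t)\leq F(X_{t_0}+B_t)$, which guarantees that when the driving Brownian motion reaches a minimum of $F$ the true process has already crossed it; together with a uniform bound $p<1$ on the probability that the Brownian motion reaches a maximum first, this yields the geometric number of rounds you invoke.
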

\bigskip

\noindent\textbf{Remark :} In the case of the velocity jump process $(X,Y)$, note that these bounds are uniform over the initial velocity $Y_0$.

\bigskip

The meaning of these bounds is the following. Suppose the auxiliary variable $U$ (whose role here is played by an arbitrary function $g$) stays for some time above a given level $M>1$. Then the position $X$ will fall in a local minima of $F$ within a time shorter than $S$, which does not depend on $M$ (i.e. a high $U$ can only accelerate the hitting time of $A$). Then to climb back up to an intermediary point of $B^{\eta}$, it takes a time $R \eta$, which is again uniform on $M>1$. From $B^{\eta}$, the probability to escape from the neighborhood of the minimum in one attempt (namely to reach $C^\eta$ before having fallen back to $A$, the bottom) is of order $e^{-\eta M}$, which is a classical metastability result (see \cite{Bovier,MonmarchePDMP} for instance) if $g$ is thought as an inverse temperature, since $\eta$ is the potential barrier to overcome.

The proof of Proposition \ref{PropHitting} is split in the two next subsections since the arguments are different for each dynamic. Note that in several proofs we will make assumptions like $x_1\leq x_2$ where $x_1$ and $x_2$ are in $\mathbb S^1$, which will make sense since at these times we will only be concerned by the behaviour of the processes on given simply connected intervals of $\mathbb S^1$.
\subsection{For the diffusion}\label{SectionHittingDiff} 
\begin{proof}[Proof of Inequality \eqref{EqHittingProba} in the diffusion case]
Let $M>1$ and $\eta \in[0,\delta]$ with $M\eta > 1$ be fixed and consider  the diffusion defined by \eqref{EqDiffInhomo} with $g\geq M$ and $X_0=x  \in B^\eta$. Let $x_0 \in A$ and $x_1\in  \partial C^\eta$ be such that 
$F$ is monotonous on the interval between $x_0$ and $x_1$ that contains $x$. In particular, $F(x_1)-F(x) = F(x)-F(x_0) = \eta$. Suppose without loss of generality that $x_0 < x < x_1$.   Since $g\geq M$, it follows from Ikeda-Watanabe's comparison result \cite[Theorem 1.1, Chapter VI]{IW} that
\begin{equation*}
q_{x\rightarrow C^\eta} \leqslant \mathbb{P}\po \widetilde X \text{ hits } C^\eta \text{ before } A\mid \widetilde X_0=x\pf := \tilde q_{x\rightarrow C^\eta} ,
\end{equation*} 
where $\widetilde X$ solves the SDE
\begin{equation*}
d \widetilde X_t = dB_t -MF'\po \widetilde X_t\pf dt.
\end{equation*}
Indeed, note that the definitions of $q_{x\rightarrow C^\eta} $ and  $\tilde q_{x\rightarrow C^\eta}$ only involve the processes on the interval $[x_0,x_1]$, on which $F'\geq 0$. The scale function of $\widetilde X_t)_{t\geqslant 0}$ is defined by
\begin{eqnarray*}
p(y)&=&\int_{x }^y\exp \left(-2\int_{x  }^z -M F'(s)ds\right )dz \\
&=& \int_{x  }^y e^{2M (F(z)-F(x ))}dz.
\end{eqnarray*}
By \cite[Proposition 5.22, Chapter 5.5]{K-S},
\begin{equation*}
 \tilde q_{x\rightarrow C^\eta} =  \frac{p(x_1)-p(x)}{p(x_1)-p(x_0)}  \leq \frac{2\pi e^{2M\eta }}{\int_{x_0}^{x_1} e^{2M\po F(z) - F(x)\pf}dz}
\end{equation*}
 where we used the local monotony of $F$.  On the other hand,  
\begin{eqnarray*}
\int_{x_0}^{x_1} e^{2M\po F(z) - F(x)\pf}dz  
& \geqslant &  \frac{1}{2M \Vert F'\Vert_\infty} \int_{x_0}^{x_1} 2M F'(z) e^{2M (F(z)-F(x))}dz\notag\\
&=& \frac{1}{2M \Vert F'\Vert_\infty}\po e^{4M\eta }-1\pf.
\end{eqnarray*}
Therefore, as $M\eta >1$,
\[ q_{x\rightarrow C^\eta}\ \leqslant\   4\pi M \Vert F'\Vert_\infty \frac{e^{2M\eta}}{e^{4M\eta}-1} 
\  \leqslant\   8\pi M \Vert F'\Vert_\infty e^{-2M\eta}. \]

\end{proof}
To prove the two other assertions of Proposition \ref{PropHitting}, we need the following comparison result: 
\begin{lem}\label{compar} Let $x_0$ be a local extrema of $F$ and   $\varepsilon >0$ be such that $F'$ is monotonous on $J_\varepsilon := (x_0 -\varepsilon ,x_0 +\varepsilon)$. Consider $X$ the diffusion defined by \eqref{EqDiffInhomo}, with $g\geq M>1$, starting at $X_0=x\in J_\varepsilon$, and  let $W$ be  a standard Brownian motion. Denote by
\[\chi^\varepsilon (x) =\inf \left\{ t>0 \, :\   X_t\notin J_\varepsilon \right\}\hspace{15pt}\text{and}\hspace{15pt}\iota^\varepsilon =\inf \left\{ t>0 \, :\   |W_t|=\varepsilon \right\}\] 
the respective exit time  from $J_\varepsilon$ of $X$ and $x_0+W$. Then:
\begin{enumerate}
\item if $x_0$ is a local maximum of $F$, for all $x\in J_\varepsilon$,
\begin{equation*}
\chi^\varepsilon (x)\overset{sto}\leqslant\iota^{\varepsilon}.
\end{equation*}
\item if $x_0$ is a local minimum of $F$,
\begin{equation*}
\chi^\varepsilon (x_0)\overset{sto}\geqslant \iota^{\varepsilon}.
\end{equation*}
\end{enumerate}
\end{lem}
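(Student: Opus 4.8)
The plan is to prove this via the classical comparison theorem for one-dimensional diffusions (Ikeda–Watanabe), by writing both processes as solutions of SDEs on the interval $J_\varepsilon$ driven by the same Brownian motion and comparing their drift coefficients. The key observation is that $x_0 + W$ solves $dZ_t = dB_t$ with zero drift, while $X$ solves $dX_t = dB_t - g(t)F'(X_t)dt$. So the sign of the drift $-g(t)F'(X_t)$ controls the ordering: since $g \geq M > 1 > 0$, the sign of the drift is exactly the opposite of the sign of $F'$.

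First, I would treat the local maximum case. If $x_0$ is a local maximum, then on $J_\varepsilon$ (where $F'$ is monotonous, hence $F'$ changes sign exactly at $x_0$) we have $F'(z) \geq 0$ for $z \leq x_0$ and $F'(z) \leq 0$ for $z \geq x_0$; thus the drift $-g(t)F'(z)$ pushes $X$ \emph{toward} $x_0$ (it is $\leq 0$ on the right half, $\geq 0$ on the left half). Intuitively this confines $X$ more than pure Brownian motion, so $X$ should exit later — wait, we want $\chi^\varepsilon(x) \overset{sto}{\leq} \iota^\varepsilon$, i.e. $X$ exits \emph{sooner}. This needs care: the correct comparison is not a pathwise sandwiching of the processes themselves but a comparison of exit times, and the right tool is a time-change / scale-function argument or a reflection-type coupling. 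Concretely, I would use the scale function: the exit time of $X$ from $J_\varepsilon$ is stochastically dominated by that of the diffusion with drift $-MF'$ (by \cite[Theorem 1.1, Chapter VI]{IW} applied appropriately, since a drift toward the center only delays exit and $g \geq M$), and then compare the $-MF'$-diffusion's exit time to $\iota^\varepsilon$ — here one observes that the confining drift makes the scale function expand near the boundary, which \emph{shortens} the exit time relative to the natural-scale (Brownian) case when started from a generic point, because the process is "sped up" in the scale coordinate near the edges. The cleanest route is: apply a deterministic time change turning $X$ into a process in natural scale, and check that the time change only accelerates the clock on $J_\varepsilon$ because of the drift's confining effect, so the exit happens earlier in real time.

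For the local minimum case, the drift $-g(t)F'(z)$ points \emph{away} from $x_0$ (it is $\geq 0$ for $z > x_0$, $\leq 0$ for $z < x_0$), so $X$ started exactly at $x_0$ is pushed outward; one might naively expect it to exit sooner, but the claim $\chi^\varepsilon(x_0) \overset{sto}{\geq} \iota^\varepsilon$ says it exits \emph{later}. The resolution is again via the scale function: starting from the symmetric point $x_0$, for the repelling drift the scale function has an inflection at $x_0$ and is flatter near $x_0$ than the identity, so the process moves slowly in scale coordinate near the start; combined with the symmetry this yields the stochastic lower bound. I would again realize this rigorously through Ikeda–Watanabe comparison (comparing $X$ to the $-MF'$-diffusion) together with an explicit time-change estimate against the Brownian exit time, using that $F'$ is monotonous on $J_\varepsilon$ so the sign structure is clean and the monotonicity of $g \mapsto$ (confinement strength) is transparent.

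The main obstacle I anticipate is getting the \emph{direction} of the stochastic inequalities right and justifying them cleanly: a naive pathwise coupling argument comparing $X_t$ with $x_0 + W_t$ does not directly bound exit times, and one must be careful that "more confining drift" translates to "earlier exit" (not later) for the maximum case, and "more repelling drift from the center" translates to "later exit" for the minimum case when started precisely at the center. I expect the paper resolves this via the scale-function representation of the exit time plus a monotonicity-in-$M$ time-change argument; the computations should be short once the setup is fixed, with the Brownian case ($F' \equiv 0$, scale function $=$ identity) serving as the reference point for both comparisons.
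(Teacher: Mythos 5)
Your proposal is built on a sign error that inverts the entire logic of the lemma, and the ``paradox'' you then try to resolve does not exist. At a local \emph{maximum} $x_0$ of $F$ we have $F'\geq 0$ on the left of $x_0$ and $F'\leq 0$ on the right, so the drift $-g(t)F'(X_t)$ is $\leq 0$ on the left half and $\geq 0$ on the right half of $J_\varepsilon$: it pushes $X$ \emph{away} from $x_0$ (downhill for the potential $gF$), not toward it. Symmetrically, at a local \emph{minimum} the drift is \emph{confining}. Hence both inequalities of the lemma go in the intuitive direction --- repelling drift at a maximum means earlier exit, confining drift at a minimum means later exit --- and no scale-function or time-change subtlety is needed. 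Your proposed resolutions (``a confining drift makes the scale function expand near the boundary, which shortens the exit time'', ``the scale function has an inflection at $x_0$ and is flatter near $x_0$'') are not only unnecessary but false as stated: a confining drift lengthens, not shortens, the exit time from a symmetric interval; moreover the diffusion here is time-inhomogeneous, so an honest scale/speed argument would require extra work that you never carry out. As written, the proposal does not constitute a proof of either inequality.

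The actual argument is short and elementary. The two-sided exit problem is reduced to a one-sided comparison by considering $\Theta_t=(X_t-x_0)^2$, which by It\^o's formula satisfies $d\Theta_t=2\sqrt{\Theta_t}\,d\widetilde B_t+dt-2g(t)(X_t-x_0)F'(X_t)\,dt$ with $\widetilde B$ a Brownian motion, while $Z_t=(x-x_0+\widetilde B_t)^2$ solves the same equation without the last drift term. On $J_\varepsilon$ one has $(z-x_0)F'(z)\leq 0$ at a maximum and $\geq 0$ at a minimum, so Ikeda--Watanabe gives the pathwise ordering $\Theta\geq Z$ (resp.\ $\Theta\leq Z$) up to the exit time, and comparing the hitting times of the level $\varepsilon^2$ yields the stochastic inequalities. (Note that a pathwise comparison \emph{does} bound exit times once you pass to the squared distance from the center --- this is the trick your sketch misses.) The remaining mismatch of starting points in case 1 ($X_0=x$ versus $W_0=x_0$) is handled by the reflection symmetry of Brownian motion: the exit time of $x_0+W$ from $J_\varepsilon$ stochastically dominates that of $x+W$ for every $x\in J_\varepsilon$, since $x_0+W$ must first hit $x$ or its mirror image $2x_0-x$. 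This also explains why case 2 is stated only for the starting point $x_0$, an asymmetry your proposal does not account for.
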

\begin{proof} First, note that by symmetry the exit time from $J_\varepsilon$ of $x+W$ has the same law as the exit time  of $x+2(x_0-x) + W$, and since the process $x_0+W$ necessarily crosses $x$ or $x+2(x_0-x)$ before leaving $J_\varepsilon$, the exit time of $x_0+W$ is stochastically greater than the one of $x+W$ for any $x\in J_\varepsilon$.

Consider  $\Theta_t = (X_t-x_0)^2$, which solves
	\begin{eqnarray*}
		d\Theta_t &=& 2\sqrt{\Theta_t}d\widetilde B_t +dt-2g(t)\big((X_t-x_0)F'(X_t) \big)dt,
	\end{eqnarray*} 
 where $\widetilde B_t = \int_0^t sign(X_s-x_0)d B_s$ is still a standard Brownian motion.   As easily seen, $ \po X_0-x_0 + \widetilde B\pf^2$ is a weak solution of  
 	\begin{eqnarray*}
		dZ_t &=& 2\sqrt{Z_t}d\widetilde B_t +dt  .
	\end{eqnarray*}
	 Now, we can compare processes $(\Theta_t)_{t\geq 0}$ and $(Z_t)_{t\geq 0}$. When $x_0$ is a maximum (resp. minimum) of $F$, $(x-x_0)F'(x)$ is non-positive (resp. non-negative) on $J_\varepsilon$, so that by Ikeda-Watanabe's comparison result  (\cite[Theorem 1.1, Chapter VI]{IW}), $\Theta_t\geqslant Z_t$ (resp. $\Theta_t\leqslant Z_t$)  up to the first time where $\Theta$ reaches $\varepsilon^2$. As a conclusion, when $x_0$ is a maximum, $\Theta$ reaches $\varepsilon^2$ before $Z$, and thus in a time stochastically smaller than $\iota^\varepsilon$, whereas when $x_0$ is a minimum, $\Theta$ reaches $\varepsilon^2$ after $Z$ and the latter happens at a time with law $\iota^\varepsilon$ if the starting point is $x_0$.
\end{proof}

\begin{proof}[Proof of Inequality \eqref{EqHittingR} in the diffusion case]
Recall that there exists a constant $\kappa>0$ such that for all $x\in A$ and $\eta<\delta$, $d(x,B_x^{\eta})\geqslant \kappa \sqrt \eta$. From Lemma~\ref{compar} and the Brownian motion's scaling property,
\begin{equation*}
T_{x\rightarrow B^\eta}\geqslant \chi^{\kappa \sqrt \eta } ( x)\overset{sto}{\geqslant} \iota^{\kappa \sqrt \eta}\overset{law}{=} \eta \iota^\kappa .
\end{equation*}
The fact that $\iota^\kappa$ has an exponential moment is a consequence of \cite[Theorem 2]{CT}. 
\end{proof}
 
\begin{proof}[Proof of Inequality \eqref{EqHittingS} in the diffusion case]
For a given small enough $\varepsilon>0$, denote by
\[E^\varepsilon = \underset{x\in  M(F,+)\cup M(F,-)}{\bigcup}  (x-\varepsilon,x+\varepsilon)\]
 the set of points which are at a distance less than $\varepsilon$ from a maximum of $F$. Let $X$ be the diffusion defined by \eqref{EqDiffInhomo} with $g\geq M$. We apply the following procedure:
\begin{enumerate}
\item If, at some time, $X_t \in E^\varepsilon$, wait until it leaves $E^\varepsilon$, which according to the first part of Lemma \ref{compar} happens in a time stochastically smaller than $\iota^\varepsilon$.
\item If at some time $t_0$, $X$ leaves $E^\varepsilon$, compare it with $X_{t_0}+B$ where $B$ is the Brownian motion that drives the SDE \eqref{EqDiffInhomo}. More precisely by Ikeda-Watanabe's comparison result, $F(X_t)\leqslant F(X_{t_0}+B_t)$ up to the time where either $X$ or $X_{t_0}+B$ reach an extremum of $F$. 
\item Wait until $B$ reaches an extrema of $F$. If this is a maximum, go back to the first step. If this is a minimum then necessarily, at this time, $X$ has already crossed this minimum, stop the procedure.
\end{enumerate}
Note that, $\varepsilon$ being fixed, the probability that $x_0 + B$ reaches a maximum rather than a minimum is bounded above by some $p<1$ which is uniform over all $x_0\in \partial E_{\varepsilon}$. Hence the number of iteration of the procedure is stochastically less than a geometric random variable $G$ with parameter $p$.  Conditionally to whether the Brownian motion reaches a minimum or a maximum in step 3, the law of the duration of the third step is different, but in either cases it is stochastically smaller than $\iota^{2\pi}$. Therefore the total duration of one iteration of the procedure is stochastically smaller than $\iota^{C}$ for some constant $C>0$, independently from whether this is the last iteration or not.  Let $(\iota_k)_{k\geq 0}$ be i.i.d copies of $\iota^{C}$, independent from $G$.
	
	We have obtained that for all $x\in\mathbb{S}^1$,
	\begin{eqnarray*}
	T_{x\rightarrow A} & \overset{sto}{\leqslant} & \sum_{k=0}^G \iota_k
	\end{eqnarray*} 
so that
\begin{eqnarray*}
\mathbb E\po e^{c T_{x\rightarrow A}}\pf & \leq & \mathbb E \po \po \mathbb E \po e^{c \iota_0}\pf\pf^{G} \pf 
\end{eqnarray*}
which is finite for $c$ small enough since $G$ admits an exponential moment.
\end{proof}
\subsection{For the velocity jump process}\label{SectionHittingPDMP} 
This subsection is devoted to the proof of Proposition \ref{PropHitting} in the PDMP case, namely for the inhomogeneous Markov process $(X,Y)$ with generator \eqref{EqPDMPInhomo}. First we construct a trajectory of the process $(X,Y)$ in the following way: consider two independent i.i.d. sequences of standard (with mean 1) exponential random variables $(E_i)_{i\in\mathbb N}$ and $(F_i)_{i\in\mathbb N}$. Set $T_0=0$ and suppose the process has been defined up to some time $T_k$ independently from $(E_i,F_i)_{i\geq k}$. Let
\begin{eqnarray*}
	\theta_1 &=& \inf\left\{t>0 \, :\,  \int_0^t g(T_k+s)(Y_{T_k} F'(X_{T_k} + s Y_{T_k}))_+ \dd s >E_k\right\},\\
	\theta_2 &=& \frac1\lambda F_k,
\end{eqnarray*}
and $T_{k+1}=T_k+\theta_1\wedge \theta_2$, which is the next jump time. If $T_{k+1}=T_k+\theta_1$ we say that the jump is due to the landscape, else we say it is due to the constant rate $\lambda$. In either cases, set $X_t = X_{T_k} + (t-T_k) Y_{T_k}$ for all $t\in [T_k,T_{k+1}]$, $Y_t=Y_{T_k}$ for all $t\in [T_k,T_{k+1})$ and $Y_{T_{k+1}} = - Y_{T_k}$. Thus by induction the process is defined up to time $T_{n}$ for all $n$. Note that even if, depending on $g$, the rate of jump may not be bounded, two jumps due to the landscape cannot be arbitrarily close (since at such a jump time, $y F'(x)$ becomes non-positive), so that there cannot be infinitely many jumps in a finite time and $T_n \rightarrow \infty$ as $n\rightarrow\infty$.

\begin{proof}[Proof of Inequality \eqref{EqHittingProba} in the PDMP case]
	We mainly have to adapt to our inhomogeneous setting the proof of  \cite[Proposition 4.1]{MonmarchePDMP}. Without loss of generality, we consider the following configuration: $x_0\in A,\; x_1\in B^\eta$ and $x_2\in \partial C^\eta$ with $x_0<x_1<x_2$, and $F$ is increasing on $[x_0,x_2]$.

	Let $M\geqslant 1$ and   $\mathcal L_M$ be the set of Lipschitz functions $g \geqslant M$. For all $x\in [x_1,x_2]$, set
\begin{eqnarray*}
\eta_x &= &\underset{g\in \mathcal L_M}\sup \mathbb{P}((X,Y) \text{ reaches $(x_2,1)$ before $(x,-1)$ }\mid (X_0 ,Y_0)=(x,1)).
\end{eqnarray*}
 where the supremum runs over the function $g$ that appears in the generator \eqref{EqPDMPInhomo} of the process $(X,Y)$.

Consider a process $(X,Y)$ with generator \eqref{EqPDMPInhomo} with some function $g\in\mathcal L_M$. For a small $\varepsilon >0$, suppose that $(X_0,Y_0)=(x-\varepsilon ,1)$.

Then the probability that $X$ goes from $x-\varepsilon$ to $x$ without any jump is less than $1-\varepsilon\po  M F'(x) +\lambda\pf + \underset{\varepsilon\rightarrow 0}o(\varepsilon)$ and the probability it reaches $(x,1)$ before $(x-\varepsilon,-1)$ but with at least one jump 
 is of order $\varepsilon^2$ as $\varepsilon\rightarrow 0$ (uniformly over $g\in \mathcal L_M$). 
 
If the process has reached $(x,1)$, it has a probability less than $ \eta_x $ to reach $(x_2,1)$ before having fallen back to $(x,-1)$. Nevertheless, if indeed it has fallen back to $(x,-1)$, it has a probability $\varepsilon\lambda  + \underset{\varepsilon\rightarrow 0}o(\varepsilon)$ to jump before reaching $(x-\varepsilon,-1)$, in which case it reaches again $(x,1)$ with probability $1+\underset{\varepsilon\rightarrow 0}o(1)$. In this latter case, it reaches $(x_2,1)$ before $(x-\varepsilon,-1)$ with probability less than  $\eta_x + \underset{\varepsilon\rightarrow 0}o(1)$. Thus everything boils down to
	\begin{eqnarray*}
\eta_{x-\varepsilon} & \leqslant  & \po 1-\varepsilon\po MF'(x)+\lambda \pf\pf  \eta_x \po 1 + \varepsilon \lambda\pf + \underset{\varepsilon\rightarrow 0}o(\varepsilon)\\
		& = & \po 1-\varepsilon MF'(x)\pf  \eta_x  + \underset{\varepsilon\rightarrow 0}o(\varepsilon).
	\end{eqnarray*}
Together with $\eta_{x_2} = 1$, it yields $\eta_x\leqslant e^{-M\po F(x_2)-F(x)\pf} $, and in particular $\eta_{x_1} \leqslant e^{-\eta M}$.

Let
\begin{eqnarray*}
r_y &= &\underset{g\in \mathcal L_M}\sup \mathbb{P}((X,Y) \text{ reaches $(x_2,1)$ before $(x_0,-1)$ }\mid (X_0 ,Y_0)=(x_1,y)).
\end{eqnarray*}
Starting from $(x_1,-1)$ and until the process either jumps or reaches $(x_0,-1)$, we have $YF'(X)<0$ so that, 	whatever the function $g$ in \eqref{EqPDMPInhomo} is, there cannot be any jump due to the landscape during this time. On the other hand if $\theta_2>2\pi$, which happens with probability $e^{-2\lambda \pi}$, there is also no jump due to the constant rate during this time, so that
		\begin{eqnarray*}
	\mathbb P\po  (X,Y)\text{ reaches }(x_1,1)\text{ before }(x_0,-1) \mid (X_0 ,Y_0) = (x_1,-1)\pf
& \leqslant &	1- e^{-2\lambda \pi}.
	\end{eqnarray*}
On the one hand it means that $r_{-1} \leq \po 1- e^{-2\lambda \pi}\pf r_1$ and on the other hand that
	\begin{eqnarray*}
r_1 &\leqslant & \eta_{x_1} + \po 1- e^{-2\lambda \pi}\pf r_1
	\end{eqnarray*}	
and finally that 
\[ q_{x_1\rightarrow C^\eta} \ \leqslant\ \max(r_1,r_{-1})\ \leqslant\ e^{2\lambda \pi} e^{-\eta M}. \]
\end{proof}

\begin{proof}[Proof of Inequality \eqref{EqHittingR} in the PDMP case]
	Since $\vert Y\vert =1$, the time needed to reach $B^\eta$ from $A$ is deterministically larger than $d(A,B^\eta)\geqslant \kappa \sqrt \eta $.
\end{proof} 

\begin{proof}[Proof of Inequality \eqref{EqHittingS} in the PDMP case]
Suppose that, at some point in the construction of a trajectory, $\theta_2> 4\pi$, which happens with probability $e^{-4\lambda\pi}$. If there is also no jump due to the landscape in the meanwhile, $X$ covers the whole circle and in particular reaches $A$ in a time less than $2\pi$. On the other hand if there is a jump due to the landscape before time $2\pi$, the velocity turns to its opposite, and from then and up to the hitting time of $A$, $YF'(X)<0$, so that in the meanwhile there cannot be another jump due to the landscape: $A$ is attained in a time less than $4\pi$.

It means that as soon as $\theta_2> 4\pi$, $X$ reaches $A$ in a time less than $4\pi$, so that starting from any point of $\mathbb S^1$, $X$ reaches $A$ in a time stochastically smaller than $4\pi G$ where $G$ is a geometric variable with parameter $e^{-4\lambda\pi}$.
\end{proof}
\section{Stability}\label{SectionTemps}
In this section we consider either $Z=(X,U)$ or $Z=(X,U,Y)$ such as in Theorem \ref{ThmPrincipal}, and we are interested in the time of return of $Z$ to compact sets. For $M>1$ we write
\begin{eqnarray*}
	\tau_M &=&\inf\{t>0 \, :\ |U_t|\leqslant M\}
\end{eqnarray*}
and we aim to prove $\tau_M$ admits exponential moments when $\mathcal{M}=\emptyset$. For this purpose, we are going to establish that, for some $\kappa>0$, $V(u) = \exp(\kappa u)$ is a so-called Lyapunov function for both processes, which classically implies the latter.

The notations of Section \ref{SectionHitting} are kept, in particular the constant $\delta$, and the constant $K$ and the random variables $R, S$ appearing along this section are those given by Proposition \ref{PropHitting}.
\begin{lem}\label{LemBorneGlobal}
	Suppose $m(F,+)=\emptyset$. Let $M>1$ be such that $KMe^{-\delta M}<1 $, and let $(S_i)_{i\in\mathbb N}$, $(R_i)_{i\in\mathbb N}$ and $(G_i)_{i\in\mathbb N}$ be independent i.i.d. sequences where $S_0$ (resp. $R_0$) is a copy of $S$ (resp. $ \delta R$) and $G_0$ has geometric law with parameter $KMe^{-\delta M}$. For $t\geq 0$ let 
	\begin{eqnarray*}
		N_t &=& \inf\left\{ n\in\mathbb N \, : \ \sum_{k=1}^{G_0+\dots+G_n} R_k \geq t\right\}.
	\end{eqnarray*}
	Then for all $t>0$ and for any initial condition $Z_0$ with $U_0> M$,
	\begin{eqnarray*}
		\int_0^{t\wedge \tau_M} \mathbbm 1_{\{F(X_s)\geq -\delta\}} \dd s & \overset{sto}\leq & \sum_{k=0}^{N_t} S_k.
	\end{eqnarray*}
\end{lem}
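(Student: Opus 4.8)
\textbf{Proof strategy for Lemma \ref{LemBorneGlobal}.}

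The plan is to run the natural alternation between the two regimes identified in Proposition \ref{PropHitting} and to bound, separately, the time spent by $X$ in the ``dangerous'' region $\{F\geq-\delta\}$ and the number of excursions there. The crucial observation is that, since $m(F,+)=\emptyset$ (and $\mathcal M=\emptyset$ as the whole section assumes), we have $A=m(F,-)$, so $\{F\geq -\delta\}\subset C^\delta$ by the choice of $\delta$, and moreover every point of $A$ has $F$-value $\le -3\delta<-\delta$; thus entering $\{F\geq -\delta\}$ forces $X$ to leave the neighborhood of $A$, i.e.\ to have crossed $B^\delta$ on the way up. This is what will let me invoke the metastability estimate \eqref{EqHittingProba} with $\eta=\delta$, $g=U$ (legitimate on $[0,\tau_M]$ because there $U_t>M$, so $U$ is a Lipschitz function bounded below by $M$ and $M\delta>1$).

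First I would decompose the trajectory on $[0,t\wedge\tau_M]$ into successive cycles. Starting from $U_0>M$ (so $X_0$ is anywhere in $\mathbb S^1$), by \eqref{EqHittingS} the process reaches $A$ within a time stochastically dominated by a copy of $S$; call this the zeroth contribution $S_0$. Once at a point of $A$, consider one ``attempt'' at escaping the well: by \eqref{EqHittingR} it takes a time stochastically $\ge \sqrt\delta\, R$ to even reach $B^\delta$ (so $R_k$ is a copy of $\sqrt\delta R$), and from $B^\delta$ the probability $q_{x\to C^\delta}$ to reach $C^\delta$ before falling back to $A$ is at most $KMe^{-\delta M}<1$; with the complementary probability the process falls back to $A$ and a fresh independent attempt begins (by the strong Markov property at the hitting time of $A$, the future is again governed by a process of the form \eqref{EqDiffInhomo}/\eqref{EqPDMPInhomo} with a new admissible $g$, so the bounds of Proposition \ref{PropHitting} apply verbatim). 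Hence the number of attempts before one successful escape to $C^\delta$ is stochastically dominated by a geometric variable $G_0$ with parameter $KMe^{-\delta M}$, and each attempt contributes an independent waiting time $R_k$ before $X$ could possibly be back near $\{F\ge-\delta\}$. After a successful escape, $X$ is in $C^\delta$ and, since $\{F\geq-\delta\}\subset C^\delta$, it may now spend time in the dangerous region; but then it must return to $A$ again, which by \eqref{EqHittingS} costs at most another copy $S_k$ of $S$ — in particular the time spent in $\{F\geq -\delta\}$ during the $k$-th cycle, between two consecutive visits to $A$, is at most $S_k$ (because $T_{x\to A}\overset{sto}\le S$ from any point, and the indicator integral is bounded by the full excursion length). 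Iterating: by cycle $n$ the process has made $G_0+\dots+G_n$ attempts, using up a real time at least $\sum_{k=1}^{G_0+\dots+G_n} R_k$, and has accumulated at most $\sum_{k=0}^{n} S_k$ units of time in $\{F\geq -\delta\}$. Choosing $n=N_t$, i.e.\ the first $n$ for which $\sum_{k=1}^{G_0+\dots+G_n}R_k\ge t$, guarantees the real time already exceeds $t$, hence
\[
\int_0^{t\wedge\tau_M}\mathbbm 1_{\{F(X_s)\ge-\delta\}}\,\dd s \ \overset{sto}\le\ \sum_{k=0}^{N_t} S_k,
\]
which is the claim.

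The main obstacle is making the stochastic-domination bookkeeping rigorous: one must build, on a common probability space, the actual process and the i.i.d.\ sequences $(S_i),(R_i),(G_i)$ so that the pathwise inequality holds, using the strong Markov property at the successive hitting times of $A$ together with Proposition \ref{PropHitting}'s uniformity over all admissible $g\ge M$ (this uniformity is exactly what allows one to forget the precise value of $U$ and treat each new excursion with a fresh independent clock), and being careful that on $[0,\tau_M]$ we indeed have $g=U\ge M$ so the hypotheses of Proposition \ref{PropHitting} are met throughout. A secondary point to handle cleanly is that an excursion into $\{F\geq-\delta\}$ can only occur after $X$ has left the $A$-neighborhood, so that each such excursion is correctly charged to exactly one of the ``post-escape'' return-to-$A$ times $S_k$; this is where the geometry encoded in the choice of $\delta$ (namely $\{F\ge-\delta\}\subset C^\delta$ and $F\le-3\delta$ on $A$) is used. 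The diffusion-specific and PDMP-specific technicalities are already absorbed into Proposition \ref{PropHitting}, so no further case distinction is needed here.
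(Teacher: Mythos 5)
Your proposal is correct and follows essentially the same argument as the paper: dominate the hitting time of $A$ by $S_0$, treat each climb to $B^\delta$ as an attempt costing at least $R_k$ and succeeding with probability at most $KMe^{-\delta M}$ (hence geometrically many attempts per escape), note that failed attempts keep the process in $\bigcup_x I_x^\delta\subset\{F\leq-\delta\}$ so only the post-escape returns to $A$, each of length at most $S_k$, can contribute to the integral, and conclude that after time $t$ there have been at most $N_t$ such excursions. Your added remarks on the role of $m(F,+)=\emptyset$ (so that $A=m(F,-)$ and $\{F\geq-\delta\}\subset C^\delta$) and on the uniformity of Proposition \ref{PropHitting} over admissible $g$ are exactly the points the paper uses implicitly.
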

\begin{proof}
While $t\leqslant \tau_M$, the estimates of Proposition \ref{PropHitting} hold for $X$. In particular, independently from its initial condition, the process reaches $A$ in a time stochastically smaller than $S_0$. Then it takes at least a time $R_1$ to climb back to $B^\delta$. From there, it reaches $C^\delta$ with probability less that $ KM e^{-\delta M}$, else it falls back to $A$. Therefore it remains a time  stochastically greater than $ \sum_{k=1}^{G_0} R_k$ in $(C^\delta)^c \subset \{ F\leqslant -\delta \}$ before reaching $C^\delta$.  When this finally occurs, the process falls again back to $A$ after a time less than $S_1$ (independently from what occurred before it had reached $C$). We call this an excursion in $C^\delta$. After $n$ excursions, the process has stayed at least a time $ \sum_{k=1}^{G_0+\dots+G_n} R_k$ in $\{ F\leqslant -\delta \}$, which implies in particular that at time $t$ there have been stochastically less  than $N_t$ excursions. Thus during a time $t$, the time spent in $C^\delta$ is stochastically less than $\sum_{k=0}^{N_t} S_k$.
\end{proof}

\begin{prop}\label{PropLyapunov}
If $\mathcal{M}=\emptyset$, there exists $\kappa_0>0$ such that, for all $\kappa \in (0,\kappa_0]$, there exists $t_0$ such that, for all $t\geqslant t_0$, there exists $C_t\geqslant 0$ such that for all initial conditions,
\begin{eqnarray*}
\mathbb E\po e^{\kappa |U_t|}\pf & \leqslant & \frac12 e^{\kappa |u_0|} + C_t.
\end{eqnarray*}
\end{prop}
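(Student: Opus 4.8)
The plan is to make quantitative the heuristic of the introduction: when $\mathcal M=\emptyset$ every local minimum of $F$ lies at level $\leq -3\delta$, so as long as $|U|$ stays large the position $X$ is confined in $\{F\leq-\delta\}$ and $U_t=u_0+\int_0^t F(X_s)\dd s$ (which is continuous in both models) drifts back towards $0$ essentially at rate $\delta$. Fix $M>1$ with $KMe^{-\delta M}<1$, large in a sense to be specified, and recall $\tau_M=\inf\{t:|U_t|\leq M\}$. One first reduces to $u_0>M$: if $|u_0|\leq M$ then $\tau_M=0$ and the crude bound $|U_t|\leq M+\Vert F\Vert_\infty t$ gives $\mathbb E(e^{\kappa|U_t|})\leq e^{\kappa(M+\Vert F\Vert_\infty t)}=:C_t$; if $u_0<-M$ one applies the case $u_0>M$ to the pair $(-U,-F)$, which solves the same system with $F$ replaced by $-F$, noting that $\mathcal M$ is unchanged under $F\mapsto -F$ while $|U_t|$ and $|u_0|$ are not affected.

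Assume then $u_0>M$. On $\{t\geq\tau_M\}$ one has $|U_{\tau_M}|=M$ by continuity and hence $|U_t|\leq M+\Vert F\Vert_\infty t$, which again only contributes the constant $C_t=e^{\kappa(M+\Vert F\Vert_\infty t)}$ to the bound. On $\{t<\tau_M\}$ one has $U_s>M>0$ for all $s\leq t$, hence $|U_t|=U_t$, and splitting $\int_0^tF(X_s)\dd s$ according to the sign of $F(X_s)+\delta$ gives
\[ |U_t|\;\leq\;u_0-\delta t+(\Vert F\Vert_\infty+\delta)A_t,\qquad A_t:=\int_0^{t\wedge\tau_M}\mathbbm{1}_{\{F(X_s)\geq-\delta\}}\dd s. \]
Consequently $\mathbb E\big(e^{\kappa|U_t|}\mathbbm{1}_{\{t<\tau_M\}}\big)\leq e^{\kappa u_0}e^{-\kappa\delta t}\,\mathbb E(e^{cA_t})$ with $c:=\kappa(\Vert F\Vert_\infty+\delta)$, and everything reduces to showing that $\mathbb E(e^{cA_t})$ grows in $t$ strictly more slowly than $e^{\kappa\delta t}$.

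Since $\mathcal M=\emptyset$ forces $m(F,+)=\emptyset$, Lemma~\ref{LemBorneGlobal} yields $A_t\overset{sto}{\leq}\Sigma_t:=\sum_{k=0}^{N_t}S_k$, so it suffices to bound $\mathbb E(e^{c\Sigma_t})$. As the $S_k$ are i.i.d. and independent of $N_t$, $\mathbb E(e^{c\Sigma_t})=\phi_S(c)\,\mathbb E(\phi_S(c)^{N_t})$ where $\phi_S(c)=\mathbb E(e^{cS})$; choosing $\kappa_0>0$ small enough that $\mathbb E(e^{\kappa_0(\Vert F\Vert_\infty+\delta)S})<\infty$ ensures $\phi_S(c)\leq\bar\phi<\infty$ for every $\kappa\leq\kappa_0$. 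Next, $\mathbb P(N_t\geq n)=\mathbb P\big(\sum_{k=1}^{G_0+\dots+G_{n-1}}R_k<t\big)$ for $n\geq 1$, and a Chernoff bound with parameter $\theta>0$ together with the explicit moment generating function of the geometric law gives $\mathbb P(N_t\geq n)\leq e^{\theta t}\rho(\theta,p)^n$, where $p=KMe^{-\delta M}$ and $\rho(\theta,p)=\frac{p\,\psi(\theta)}{1-(1-p)\psi(\theta)}$ with $\psi(\theta)=\mathbb E(e^{-\theta\sqrt\delta R})\in(0,1)$. Summing the resulting geometric series, which is licit as soon as $\bar\phi\,\rho(\theta,p)<1$, produces $\mathbb E(e^{c\Sigma_t})\leq a+b\,e^{\theta t}$ for constants $a,b>0$ depending only on $\kappa$ and $M$.

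It remains to arrange $\theta<\kappa\delta$. Take $\theta=\kappa\delta/2$: for this fixed $\theta$ one has $\rho(\theta,p)\to 0$ as $p\to 0^+$, and since $p=KMe^{-\delta M}\to 0$ as $M\to\infty$, one may choose $M=M(\kappa)$ large enough that $\bar\phi\,\rho(\theta,p)<1$. Then $e^{-\kappa\delta t}\mathbb E(e^{cA_t})\leq a e^{-\kappa\delta t}+b\,e^{-\kappa\delta t/2}\to 0$, so there is $t_0$ depending only on $\kappa$ with $a e^{-\kappa\delta t}+b\,e^{-\kappa\delta t/2}\leq\tfrac12$ for all $t\geq t_0$, whence $\mathbb E\big(e^{\kappa|U_t|}\mathbbm{1}_{\{t<\tau_M\}}\big)\leq\tfrac12 e^{\kappa|u_0|}$; adding the $\{t\geq\tau_M\}$ contribution gives the statement with $C_t=e^{\kappa(M+\Vert F\Vert_\infty t)}$. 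The main obstacle is exactly this balancing act: one must show that the time $\Sigma_t$ spent by $X$ away from the deep wells of $F$ grows (exponentially in $t$) at a rate below $\kappa\delta$, which is only possible after the one-attempt escape probability $KMe^{-\delta M}$ from a well has been made small by enlarging $M$, so the argument genuinely rests on the metastability bound \eqref{EqHittingProba} of Proposition~\ref{PropHitting}.
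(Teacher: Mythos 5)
Your proof is correct and rests on the same skeleton as the paper's: the reduction to $u_0>M$ by the symmetry $(U,F)\mapsto(-U,-F)$, the pathwise inequality $|U_t|\leq u_0-\delta t+(\delta+\Vert F\Vert_\infty)A_t$ before $\tau_M$, and Lemma~\ref{LemBorneGlobal} as the key input. Where you genuinely diverge is in the final comparison of $\mathbb E\po e^{cA_t}\pf$ with $e^{\kappa\delta t}$. The paper keeps the bound $A_t\overset{sto}{\leq}\min\po t,\sum_{k=0}^{N_t}S_k\pf$ and splits on $\{N_t=0\}$ versus $\{N_t>0\}$: on the first event the occupation time is a single $S_0$, whose exponential moment is absorbed by $e^{-\kappa\delta t}$; on the second it uses the trivial bound $t$ and makes $\mathbb P\po N_t>0\pf\leq e^{-\kappa t\max F}/4$ by taking $M$ large \emph{depending on $t$}, via $\mathbb P\po N_t>0\pf\leq\mathbb P\po G_0<\lceil M\rceil\pf+\mathbb P\po\sum_{k=1}^{\lceil M\rceil}R_k<t\pf$. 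You instead compute the moment generating function of the compound sum $\sum_{k=0}^{N_t}S_k$ through a Chernoff bound on $N_t$ and the explicit generating function of the geometric law; this needs the extra (correct) observations that $N_t$ is independent of the $S_k$'s and that $\rho(\theta,p)\to0$ as $p\to0^+$, but it buys a choice of $M=M(\kappa)$ uniform in $t$ and an explicit decay rate $\kappa\delta/2$ for $e^{-\kappa\delta t}\mathbb E\po e^{cA_t}\pf$, which the paper's coarser dichotomy does not provide. The complementary events are also handled differently but equivalently: you split on $\{t\geq\tau_M\}$ and bound $|U_t|\leq M+\Vert F\Vert_\infty t$ there, while the paper forces $\tau_M\geq t$ by restricting to $u_0>M+t\max|F|$ and absorbs the remaining initial conditions into $C_t$; both respect the quantifier order of the statement ($t_0$ and $C_t$ may depend on $\kappa$).
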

Note that, in the sequel, the dependency of $C_t$ with respect to $t$ will not matter.
\begin{proof}
Let $\kappa_0>0$ be small enough so that $\mathbb E\po e^{\kappa_0 (\delta + \max| F|)S} \pf < \infty$ and, for $\kappa\in(0,\kappa_0]$, let $t_0$ be such that $\mathbb E\po e^{ (\kappa (\delta + \max| F|)S} \pf \leq  e^{\kappa \delta t_0}/4$. Finally, let $M>1$ be such that $KMe^{-\delta M}<1 $ and fix $t\geq t_0$.

Let $u_0>M+t \max |F|$, so that $\tau_M \geqslant t$ almost surely. Hence, Lemma~\ref{LemBorneGlobal} yields
\begin{eqnarray*}
|U_t| & \leq & u_0 - \delta t + \po\delta + \max F\pf  \int_0^t \mathbbm 1_{\{F(X_s)\geq -\delta\}} \dd s\\
& \overset{sto}\leq & u_0 - \delta t + \po\delta + \max F\pf \min\po t, \sum_{k=0}^{N_t} S_k\pf.
\end{eqnarray*}
Hence, distinguishing the cases $N_t = 0$ and $N_t>0$,
\begin{eqnarray*}
\mathbb E\po e^{\kappa \po |U_t|-u_0\pf}\pf & \leqslant & e^{-\kappa \delta t} \mathbb E\po e^{\kappa (\delta + \max F)S_0}\pf + e^{\kappa t\max F} \mathbb P \po N_t > 0\pf.
\end{eqnarray*}

Now, for all $a\in\mathbb N^*$, 
\begin{eqnarray*}
	\Big\{ G_0 \geq  a \hspace{20pt}\text{and}\hspace{20pt}  \sum_{i=1}^a R_i \geq t \Big\} & \subset & \{N_t = 0\},
\end{eqnarray*}
so that, considering the complementary sets,
\begin{eqnarray*}
 \mathbb P \po N_t > 0\pf & \leq &  \mathbb P \po G_0 < a\pf +  \mathbb P \po \sum_{k=1}^{a} R_k < t\pf.
\end{eqnarray*}
Applied with $a = \lceil M\rceil$, this reads
\begin{eqnarray*}
 \mathbb P \po N_t > 0\pf & \leq & 1- (1-K M e^{-\delta M})^M+  \mathbb P \po \sum_{k=1}^{\lceil M\rceil} R_k < t\pf \, \underset{M\rightarrow +\infty}\longrightarrow\, 0.
\end{eqnarray*}
In particular, for $M$ large enough,  $\mathbb P \po N_t > 0\pf  \leq \exp(-\kappa t\max F)/4$. Then we have proved that there exists $M>0$ such that for all $u_0>M+t \max |F|$,
\begin{eqnarray}\label{EqLyapounov1}
\mathbb E\po e^{\kappa |U_t|}\pf & \leqslant & \frac12 e^{\kappa |u_0|}.
\end{eqnarray}
The case $u_0 < - M - t\max |F|$ is similar (by changing $U$ and $F$ to their opposites), so that there exists $M>0$ such that  \eqref{EqLyapounov1} holds for all $u_0$ with $|u_0|>M+t \max |F|$. Finally, if $|u_0|\leqslant M+t \max |F|$ then
\begin{eqnarray*}
\mathbb E\po e^{\kappa |U_t|}\pf & \leqslant & e^{\kappa (M+2t\max |F|)}\ := \ C_t,
\end{eqnarray*} 
which concludes.
\end{proof}

In fact, we will also need this similar result, obtained by the same arguments:
\begin{lem}\label{LemmAtteinte}
If $m(F,+)=\emptyset$ then, for $M$ large enough, for all initial condition $u_0\geq M$,  $\inf\{s\geq 0 \ : \ U_s \leq M\}$ is almost surely finite and admits an exponential moment.
\end{lem}
\begin{proof}
In the proof of Proposition \ref{PropLyapunov} we have in fact obtained that,  under the  assumption that $m(F,+)=\emptyset$, there exists $\kappa,t,C_t>0$ such that for any initial condition $u_0\geq 0$,
\begin{eqnarray*}
\mathbb E \po e^{\kappa U_t}\pf & \leq & \frac12 e^{\kappa u_0} + C_t.
\end{eqnarray*}
Denoting $n_0= \inf\{n\in\mathbb N,\ \exp(\kappa U_{nt}) \leq 4C_t\}$, then the random sequence $\{(4/3)^{(n\wedge n_0)}\exp(\kappa U_{(n\wedge n_0)t})\}_{n\geq 0}$ is a submartingale. As a consequence, by Fatou's lemma, $\mathbb E\po (4/3)^{n_0}\pf $ is finite, which concludes.
%
\end{proof}

\section{Transition kernel bounds}\label{SectionDensite}
In this section we still consider either $Z=(X,U)$ or $Z=(X,U,Y)$ such as in Theorem~\ref{ThmPrincipal}, and we call $E$ its state space, namely either $\mathbb S^1\times \RR_+$ or $\mathbb S^1\times \RR_+\times\{-1,1\}$. 
We aim to prove that  the following local Doeblin condition holds:
\begin{prop}\label{PropNoyau}
Let $\mathcal K$ be a compact set of $E$. There exists $t_0>0$ such that, for all $t\geq t_0$, there exist $0<c<1$ and a probability measure $\nu$ on $E$ such that for all $z\in\mathcal K$, for all Borel set $D$,
\begin{eqnarray*}
\mathbb P\po Z_{t} \in D\mid Z_0 = z\pf &\geq & c \nu(D).
\end{eqnarray*}
\end{prop}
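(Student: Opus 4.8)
The plan is to establish a small-set (local Doeblin) condition by combining controllability of the deterministic/drift part with genuine nondegeneracy of the noise, treating the two models in parallel as much as possible. The key point is that on a fixed compact set $\mathcal K = \mathcal K_X \times [-M,M] (\times \{-1,1\})$ the value of $U$ is bounded, hence the drift coefficient $uF'(x)$ (resp.\ the jump rate $(\lambda + yuF'(x))_+$) is uniformly bounded on $\mathcal K$, and the auxiliary variable moves with bounded speed $|\dot U| = |F(X)| \le \max|F|$. So in a fixed time $t$, the reachable set of $U$ stays in a fixed compact interval. This reduces the problem to showing that, uniformly over starting points in $\mathcal K$, the law of $Z_t$ dominates a fixed positive multiple of a fixed measure on some ball.

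\textbf{Diffusion case.} Here $L_1$ is a hypoelliptic operator: writing the noise vector field $\partial_x$ and the drift $-uF'(x)\partial_x + F(x)\partial_u$, the bracket $[\partial_x, -uF'(x)\partial_x+F(x)\partial_u] = -uF''(x)\partial_x + F'(x)\partial_u$ generates the $\partial_u$ direction wherever $F'(x)\neq 0$, and near the (isolated) critical points of $F$ one uses the higher-order assumption $F^{(k)}(x)\neq 0$ together with iterated brackets to recover $\partial_u$; thus H\"ormander's condition holds everywhere and $(X_t,U_t)$ has a smooth, everywhere positive transition density $p_t(z,z')$ for $t>0$. Since $p_t$ is jointly continuous and strictly positive, it is bounded below by some $c_0>0$ on $\mathcal K \times \mathcal K'$ for any compact $\mathcal K'$ of positive Lebesgue measure contained in the reachable set; taking $\nu = \mathrm{Leb}(\cdot \cap \mathcal K')/\mathrm{Leb}(\mathcal K')$ and $c = c_0 \,\mathrm{Leb}(\mathcal K')$ gives the claim. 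I would present this via the standard support-theorem / Stroock–Varadhan route rather than writing out the Malliavin computation: one shows that from any $z\in\mathcal K$ one can steer the control ODE $\dot x = v(s) - u F'(x)$, $\dot u = F(x)$ to a fixed target point with a nondegenerate control, and then invokes positivity of the density.

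\textbf{PDMP case.} Here there is no density with respect to Lebesgue measure in $(x,u,y)$ — the velocity $y$ is discrete and, more seriously, $U$ is a deterministic functional of the $X$-path. Instead one argues directly on trajectories using the explicit construction from Section~\ref{SectionHittingPDMP}. The randomness comes from the jump times; the idea is to choose a specific finite sequence of $k$ jumps (at times lying in prescribed small intervals) so that the resulting piecewise-linear path of $X$ — hence the value of $U_t$ — lands in a neighborhood of a fixed target, and the event that the actual jump process realizes such a schedule (the right number of jumps, each in the right window, with the landscape-driven jumps controlled) has probability bounded below uniformly on $\mathcal K$, because the jump rate is bounded above and below on the relevant compact region. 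Conditionally on such a schedule, the map (jump times) $\mapsto (X_t,U_t)$ is a smooth submersion onto a set of positive Lebesgue measure in $\mathbb S^1\times\mathbb R$, so the conditional law has a density bounded below there; smearing over the schedule gives $\mathbb P(Z_t\in D\mid Z_0=z)\ge c\,\nu(D)$ with $\nu$ supported on that set times a fixed velocity. Here one should take $t_0$ large enough that $k$ jumps comfortably fit in time $t$ for all $t\ge t_0$.

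\textbf{Main obstacle.} The delicate step is the PDMP case: verifying that the number of jumps needed and the jump windows can be chosen uniformly over $z\in\mathcal K$ (in particular uniformly over the initial velocity $y$ and over $u\in[-M,M]$, including $u$ near where the landscape rate vanishes), and checking that the Jacobian of the map from jump times to $(X_t,U_t)$ is nondegenerate — this requires that $F$ be genuinely non-constant so that different inter-jump intervals produce genuinely different increments of $U = \int F(X_s)\,ds$. The bounded-below jump rate is what fails if $uF'(x)$ could be made to vanish identically, but the constant rate $\lambda>0$ rescues this. In the diffusion case the only subtlety is the H\"ormander condition at critical points of $F$, which is exactly why the assumption ``for all $x$ there is $k\ge1$ with $F^{(k)}(x)\neq0$'' was imposed; I would state the bracket computation and cite the standard hypoellipticity/positivity results rather than reprove them.
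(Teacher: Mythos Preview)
Your diffusion argument is essentially the paper's: H\"ormander via the iterated brackets $\partial_x^{(k)}G_1 = (-uF^{(k+1)}(x),\,F^{(k)}(x))^T$, plus Stroock--Varadhan for the support. One correction: the density is \emph{not} ``everywhere positive''. Since $|\dot U|\le\max|F|$, the support of $Z_t$ starting from $(x_0,u_0)$ is exactly $\mathbb S^1\times[u_0+(\min F)t,\,u_0+(\max F)t]$, and outside this the density vanishes. This is precisely why a threshold $t_0$ is needed: one must take $t$ large enough that $\bigcap_{(x,u)\in\mathcal K}[u+(\min F)t,\,u+(\max F)t]$ has nonempty interior, which happens once $t(\max F-\min F)>2M$. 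The paper makes this explicit via a two-step scheme (first hit a fixed small set $I_1$ from $\mathcal K$, then use a density lower bound $I_1\to I_2$); your one-step version is fine once the support constraint is stated correctly.

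For the PDMP you take a more hands-on route than the paper. The paper computes the bracket $[G_1-G_{-1},G_1]=(0,2F'(x))^T$, observes that $(G_1-G_{-1},[G_1-G_{-1},G_1])$ has rank $2$ wherever $F'\neq0$, and then invokes \cite[Theorem~4.4]{BLBMZ3} as a black box: that theorem packages exactly your ``jump-times $\mapsto$ endpoint is a submersion'' argument and outputs a local Doeblin bound from some open set $\mathcal U$. The paper then proves a separate controllability lemma (Lemma~\ref{LemControlePDMP}: reach $\mathcal U$ from $\mathcal K$ with uniformly positive probability, by building deterministic control paths that use fast balanced $\pm1$ oscillations to mimic zero velocity and adjust $u$), and concludes by Markov. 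Your direct construction is correct in principle and more self-contained, but you are re-deriving the content of the cited theorem; the paper's version is shorter because it outsources that step and isolates the controllability as a clean lemma reused later in the proof of the main theorem.
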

This condition classically ensures that two processes starting at different states in a compact set $\mathcal K$ can be coupled after a time $t$ with some probability $c>0$. Together with the Lyapunov condition obtained in Proposition \ref{PropLyapunov} which implies that, starting away, the processes reaches $\mathcal K$ in a short time, this Doeblin condition ensures exponentially fast mixing for the process  (see e.g. \cite{Hai}).

For the diffusion process, this classically follows from an hypoellipticy argument. By contrast, note that the velocity jump process is not regularizing, in the sense its transition kernel is never absolutely continuous with respect to the Lebesgue measure (at all time there is a positive probability that the process hasn't jumped yet). However the Doeblin condition can still be obtained from some controllability property and a partial regularization. 

Since, again, the arguments are different for both processes, we split the proof of Proposition \ref{PropNoyau} in two paragraphs.
\subsection{For the diffusion}
In this subsection we consider  the process $Z=(X,U)$ induced by the generator~(\ref{MODELEdiff}), namely the solution of the SDE
  \begin{eqnarray}\label{EqOrigin31}
& &  \left\{\begin{array}{rcl}
 d X_t &=& d B_t - U_t F'(X_t) dt\\
  d U_t &=& F(X_t) dt.
\end{array}  \right.
  \end{eqnarray}
\begin{lem}\label{densiteDiff}
 For all $z_0\in\mathbb{S}^1\times \RR$ and $t>0$, the transition kernel $\mathbb P\po Z_t\in \cdot\ |\ Z_0=z_0\pf$ admits a smooth density with respect to the Lebesgue measure 
 and its support is $\mathbb{S}^1\times [u_0 +(\min F) t, u_0 +(\max F) t]$.
\end{lem}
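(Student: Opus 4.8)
The plan is to treat this as a Hörmander-type hypoellipticity statement together with a support-of-the-diffusion (Stroock--Varadhan) argument. First I would identify the relevant vector fields: writing the system \eqref{EqOrigin31} in Stratonovich form on $\mathbb S^1\times\RR$, the diffusion coefficient gives the field $X_0 = \partial_x$ and the drift gives $X_1 = -uF'(x)\partial_x + F(x)\partial_u$. One Lie bracket already suffices to span the tangent space: $[X_0,X_1] = -uF''(x)\partial_x + F'(x)\partial_u$, so the pair $\{X_0, [X_0,X_1]\}$ has determinant $F'(x)$ in the $(\partial_x,\partial_u)$ basis, and $\{X_0, X_1\}$ has determinant $F(x)$. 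By the standing assumption on $F$, at every point $x$ we have $F(x)\neq 0$ or $F'(x)\neq 0$ (indeed $F'(x)=0$ forces $F(x)\neq0$), so one of these two determinants is nonzero at each point; hence the weak Hörmander condition holds everywhere on $\mathbb S^1\times\RR$. By the standard parabolic Hörmander theorem, the transition kernel $\mathbb P(Z_t\in\cdot\mid Z_0=z_0)$ has a $C^\infty$ density with respect to Lebesgue measure for every $t>0$.

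Next I would pin down the support. Since $\dd U_t = F(X_t)\,\dd t$ with no noise in the $U$-component, we have the deterministic two-sided bound $u_0 + (\min F)t \le U_t \le u_0 + (\max F)t$ for every realization, so the support is contained in $\mathbb S^1\times[u_0+(\min F)t,\ u_0+(\max F)t]$. For the reverse inclusion I would invoke the Stroock--Varadhan support theorem: the support of the law of $Z_t$ is the closure of the set of endpoints $z(t)$ of solutions of the controlled ODE $\dot x = h(s) - u(s)F'(x)$, $\dot u = F(x)$, over smooth controls $h$. It then remains to show that this reachable set is all of the (open) cylinder $\mathbb S^1\times(u_0+(\min F)t,\ u_0+(\max F)t)$, after which taking closures gives the claimed support. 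Concretely: given a target $(x_1,u_1)$ with $u_1$ strictly between the two extreme values, one first steers $x$ — using the large freedom in $h$ — to sit near a point where $F$ is large (resp. small) for an appropriate fraction of $[0,t]$ and near a point where $F$ takes the complementary value for the rest, tuning the time-splitting so that $\int_0^t F(x(s))\,\dd s = u_1 - u_0$; since $F$ is continuous and non-constant this integral can be made to hit any value strictly inside the extreme range, and by continuity one can simultaneously arrange $x(t)=x_1$. Density of this construction in the open cylinder, plus the deterministic bound above, gives the support equality including the boundary by closure.

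The main obstacle is the support-theorem half, specifically the explicit controllability argument: one has to steer the pair $(x,u)$ to an arbitrary interior target, and because $u$ drives the $x$-dynamics back (through the $-uF'(x)\partial_x$ term) the two components are genuinely coupled, so a clean decoupled construction is not immediate. The cleanest route is probably to exploit that $h$ can be taken large on short intervals to move $x$ essentially freely and quickly (making the $-uF'(x)$ term negligible over short times), reducing the problem to: choose a (piecewise constant, say) target profile $s\mapsto x(s)$ whose $F$-average over $[0,t]$ equals $u_1-u_0$ and whose terminal value is $x_1$, which is elementary once $F$ is non-constant. I would also need the minor remark that the Hörmander density argument is uniform enough in $z_0$ on compacts to feed into Proposition~\ref{PropNoyau}, but for Lemma~\ref{densiteDiff} itself pointwise-in-$z_0$ smoothness is all that is asserted.
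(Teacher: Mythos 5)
Your support argument is essentially the paper's own: the deterministic bound $u_0+(\min F)t\leq U_t\leq u_0+(\max F)t$ gives one inclusion, and Stroock--Varadhan plus a piecewise-constant control that parks $x$ near a minimizer and a maximizer of $F$ for a tuned split of $[0,t]$ (using large controls on short intervals so that the coupling term $-uF'(x)$ is harmless) gives the other. That half is fine.

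The gap is in the hypoellipticity step, exactly at the critical points of $F$. Smoothness of the \emph{transition} density requires the parabolic H\"ormander condition, in which the drift $X_1$ may contribute only through brackets $[X_0,X_1]$, $[X_0,[X_0,X_1]]$, \dots, and not as a spanning vector in its own right. Your argument uses the pair $\{X_0,X_1\}$ (determinant $F(x)$) precisely at the points where $F'(x)=0$, i.e.\ where the first bracket degenerates, and this is not sufficient: the toy system $dX_t=dB_t$, $dU_t=dt$ has drift and diffusion field spanning $\RR^2$ everywhere, yet $U_t=u_0+t$ is deterministic and $Z_t$ admits no two-dimensional density. (Including the drift at level zero yields hypoellipticity of $L$, not of $\partial_t-L$; in Hairer's formulation the drift is paired with $-\partial_t$ and is ``used up'' producing the time direction.) The repair is the computation the paper actually makes: the iterated brackets satisfy $[X_0,[X_0,\dots[X_0,X_1]\dots]]=(-uF^{(k+1)}(x),\,F^{(k)}(x))$, and the standing assumption that for every $x$ there is some $k\geq 1$ with $F^{(k)}(x)\neq 0$ guarantees that $X_0$ together with one such bracket spans $\RR^2$ at every point, within the parabolic framework. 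With that substitution the rest of your proof goes through.
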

  \begin{proof}
  	For $(x,u)\in\mathbb{S}^1\times \RR$, set 
  	\begin{equation*}
G_0(x,u)=  \begin{pmatrix}
1 \\
0
\end{pmatrix}\text{ and } G_1(x,u)= \begin{pmatrix}
-uF'(x) \\
F(x)
\end{pmatrix}.
  	\end{equation*}
  	Denoting $\nabla G_i$ the Jacobian matrix of $G_i$ for $i=1,2$, the Lie-bracket of $G_0,G_1$ is the vector field $[G_0,G_1]$ defined  by 
  	\begin{equation*}
  	[G_0,G_1](x,u)= G_0(x,u)\nabla G_1(x,u)- G_1(x,u)\nabla  G_0(x,u)\,,
  	\end{equation*}
  	which is here equal to
  	\[\partial_x G_1(x,u)= \begin{pmatrix}
  	-uF''(x) \\
  	F'(x)
  	\end{pmatrix}.\]
  	By induction, replacing $F$ by $F^{(k)}$ for  $k\in\mathbb N$ in the previous computation, we get
  	  \begin{equation*}
  	  [\underbrace{G_0,[G_0,\ldots[G_0}_{k\text{ times}},G_1]\ldots]](x,u)= \partial_x^{(k)} G_1(x,u)= \begin{pmatrix}
  	  -uF^{(k+1)}(x) \\
  	  F^{(k)}(x)
  	  \end{pmatrix}.
  	  \end{equation*}
  	Therefore, by our non-degeneracy assumption on $F$ (namely that for all $x\in\mathbb S^1$, $F^k(x)\neq 0$ for some $k$), the SDE \eqref{EqOrigin31} satisfies everywhere the H\"ormander condition (see for instance \cite{Hai}), which gives the first part of the lemma. 
  	For the second part, first note that for $z=(x_0,u_0)\in\mathbb{S}^1\times \RR$,
  	\begin{equation*}
  	\Big((X_t,U_t)\Big)_{t\geqslant 0}\subset \mathbb{S}^1\times [u_0 +(\min F) t, u_0 +(\max F) t].
  	\end{equation*}
  	In order to apply the Stroock-Varadhan support Theorem \cite[Theorem 5.2]{SV}, we are lead to the study of the following deterministic control problem. Denote by $(x_s, u_s)_{s\geqslant 0}$   the solution of the ordinary differential equation
  	  \begin{eqnarray}\label{EDOxu}
  	  & &  \left\{\begin{array}{rcl}
  	  \dot{x} &=& v(t) - u F'(x) \\
  	  \dot{u} &=& F(x) 
  	  \end{array}  \right.
  	  \end{eqnarray}
  	  with initial condition $(x(0), u(0))=z$ and where $s\mapsto v(s)$ is a piecewise constant function. The proof will be concluded if, given any  $z'=(x',u')\in \mathbb{S}^1\times \Big(u_0 +(\min F) t, u_0 +(\max F) t\Big)$, we can build a  function $v$ such that $(x(t), u(t))$ is arbitrarily close to $z'$. Let $t_0,t_1> 0$ be such that $t_0+t_1= t$ and $u'-u_0 = t_0 (\min F) +t_1(\max F)$. The idea is the following: since we have the choice for $v$, we can essentially drive $x(t)$ to any position, so we put it first at a minimum of $F$ for a time $t_0$, then at a maximum of $F$ for a time $t_1$, and finally we bring it to the end point $x'$.
  	  
  	  More precisely, for any $\varepsilon \in [0,\min(t_0,t_1)/2)$, let $y_0,y_1,y_2\in\mathbb R $ be such that $F(x+y_0)=\min F$, $F(x+y_0+y_1) = \max F$ and $x+y_1+y_2+y_3 = x'$. Set $v(s) = y_0/\varepsilon$ for $t\in[0,\varepsilon]$, $v(s)=0$ for $t\in(\varepsilon,t_0]$, $v(s) = y_1/\varepsilon$ for $s\in(t_0,t_0+\varepsilon]$, $v(s) =  0$ for $s\in [t_0+\varepsilon,t_1-\varepsilon)$ and finally $v(s) = y_2/\varepsilon$ for $s\in[t_1-\varepsilon,t_1]$.  Let $s\mapsto z_\varepsilon(s):= (x_\varepsilon(s),u_\varepsilon(s))$ be the solution of the associated equation \eqref{EDOxu} with initial condition $z$. Remark that for all $s\in[0,t]$ and for all $\varepsilon>0$, $|u_\varepsilon(s)| \leq |u_0|+t\| F\|_\infty$, and in particular $u_\varepsilon(s) F'(x_\varepsilon(s))$ is bounded uniformly in $s$ and $\varepsilon$. As a consequence, $(x_\varepsilon(\varepsilon),u_\varepsilon(\varepsilon)) \rightarrow (x+y_0,u_0)$ as $\varepsilon \rightarrow 0$. Since $x+y_0$ is a minimum of $F$, $s\mapsto z_*(s):=(x+y_0,u_0 + s \min F)$ solves \eqref{EDOxu} with $v(s) = 0$, so that
  	  \[\underset{s\in[\varepsilon,t_0]} \sup | z_\varepsilon(s) - z_*(s)| \ \underset{\varepsilon\rightarrow 0}\longrightarrow \ 0\,.\]
  	  Then, similarly, $z_\varepsilon(t_0+\varepsilon) \rightarrow (x+y_0+y_1,u_0+t_0\min F)$ as $\varepsilon \rightarrow 0$ and thus
  	    	  \[\underset{s\in[t_0+\varepsilon,t_1-\varepsilon]} \sup | z_\varepsilon(s) - \tilde z_*(s)| \ \underset{\varepsilon\rightarrow 0}\longrightarrow \ 0\,.\]
where $s\mapsto \tilde z_*(s) := (x+y_0+y_1,u_0 + t_0 \min F + (s-t_0) \max F)$ solves again \eqref{EDOxu} with $v(s) = 0$. Finally $z_\varepsilon(t) \rightarrow z'$ as $\varepsilon$ vanishes, which concludes.
%
  	   
  \end{proof}
  
\begin{proof}[Proof of Proposition \ref{PropNoyau} in the diffusion case]

Denoting by $p_t(\cdot,\cdot)$ the transition density given by Lemma \ref{densiteDiff}, let $z_1,z_2\in E$ be such that $p_{t_1}(z_1,z_2)>0$ for some $t_1>0$. By continuity, there exist neighbourhood $I_1$ and $I_2$ of respectively $z_1$ and $z_2$  such that the infimum  of $p_{t_1}$ over $I_1\times I_2$ is $c>0$.

Let $\mathcal K$ be a compact set and let $t_0$ be large enough so that
\begin{equation*}
I_1 \cap\po  \mathbb{S}^1\times\underset{(x,u)\in \mathcal K}{\bigcap} [u+(\min F)t_0, u+(\max F)t_0]\pf
\end{equation*}
has a non-empty interior. For $t\geq t_0$, the continuity of $p_{t}$ and the compactness of $\mathcal K$ imply
\begin{equation*}
c_t := \inf_{z\in \mathcal K} \mathbb{P}(Z_{t}\in I_1\ \vert\ Z_0 =z)>0.
\end{equation*}
Let $\nu$ be the uniform measure on $I_2$, namely $\nu(D) = \frac{\lambda\po D \cap I_2\pf}{\lambda (I_2)}$ for any Borel set $D$ of $E$. Then for all $z\in \mathcal K$ and for $t\geq t_0+t_1$,
\begin{eqnarray*}
\mathbb P\po Z_{t}\in D \ \vert\ Z_0 =z\pf &\geqslant & \mathbb P\po Z_{t}\in D \ \vert\ Z_{t-t_1}\in I_1 \pf \mathbb P\po Z_{t_1 }\in I_1 \ \vert\ Z_0 =z\pf\\
&\geqslant &  c_{t-t_1} c \lambda(I_2) \nu (D). 
\end{eqnarray*}

\end{proof}  
  
\subsection{For the velocity jump process}

In this subsection we consider the process $Z=(X,Y,U)$ with generator~\eqref{MODELPDMP}. The construction of a trajectory is similar to the one exposed in Section \ref{SectionHittingPDMP}, except from these slight modifications: in the definition of $\theta_1$, $g(T_k+s)$ is replaced by $U_{T_k} + \int_0^s F\po X_{T_k}+u Y_{T_k}\pf du$ and between the two jump times $T_k$ and $T_{k+1}$, $U$ is defined by $U_t= \int_{T_k}^t F(X_s)ds$.

We start with a controllability result.  
\begin{lem}\label{LemControlePDMP}
	Let $\mathcal K$ and $\mathcal V$ respectively be a compact and open set of $\mathbb S^1 \times \R \times \{-1,1\}$. Then there exists $t_0>0$ such that, for all $t\geq  t_0$,
	\begin{eqnarray*}
		\underset{z\in\mathcal K}\inf \mathbb P\po Z_{t} \in\mathcal V\ |\ Z_0 = z\pf & > & 0. 
	\end{eqnarray*}
\end{lem}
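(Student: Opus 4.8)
The plan is to prove the controllability statement for the PDMP by explicitly constructing, for each starting point $z=(x_0,u_0,y_0)\in\mathcal K$, a trajectory that reaches an arbitrary target neighborhood in a time that can be taken uniform over $\mathcal K$, and then invoke the standard fact that such ``good'' trajectories have positive probability uniformly over a compact set of initial conditions. The key observation making the argument work is the same one used in Section~\ref{SectionHittingPDMP}: jumps due to the constant rate $\lambda$ occur with a probability bounded below as long as we only ask the process to behave well on a fixed time horizon, and stretches of ballistic motion with a fixed velocity are realized with positive probability exactly when no landscape jump is forced, i.e. when $yF'(X)\leq 0$ along the stretch (or when $U$ is small enough that the landscape rate stays bounded). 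So one first fixes a large time $T$, works on the event that all relevant $\lambda$-jumps happen in prescribed small time windows (probability $\geq e^{-cT}>0$), and on that event steers the process.

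The construction itself has three phases. First, since $\mathcal K$ is compact, $|u_0|$ is bounded, and the drift on $U$ is bounded by $\max|F|$, so over any fixed time the process stays in a compact set of $U$-values; the idea is to bring $U$ close to a prescribed target value $u^\ast$ in the target open set $\mathcal V$ by alternately parking $X$ near a point where $F>0$ and near a point where $F<0$ (both exist since $F$ changes sign), using $\lambda$-jumps to reverse velocity and oscillate in a small interval around such a point: while oscillating near a point with $F$ of fixed sign, $U$ drifts monotonically, and by choosing the durations we can hit any $u$ in the reachable interval, which for large enough $T$ is an interval around $u^\ast$. Second, once $U$ is at the right value and we have a small time budget left, steer $X$ to the target position $x^\ast$: this just requires letting the process move ballistically (possibly with one forced $\lambda$-reversal) so that $X$ covers the needed arc; since $\dot X=Y=\pm1$, any position is reachable in time $\leq 2\pi$. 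Third, arrange the final velocity to be the prescribed $y^\ast$ by inserting or not inserting a last $\lambda$-jump in the final small window. Throughout, the open-ness of $\mathcal V$ gives the room to absorb the $o(1)$ errors coming from the finite widths of the oscillation/steering windows, exactly as in the Stroock--Varadhan-type argument in Lemma~\ref{densiteDiff}.

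To turn this controllability into the uniform lower bound $\inf_{z\in\mathcal K}\mathbb P(Z_T\in\mathcal V\mid Z_0=z)>0$, I would argue by a compactness/continuity argument: for each $z\in\mathcal K$ the above construction gives a positive probability $p(z)>0$ that $Z_T\in\mathcal V$; moreover the construction depends continuously (in an appropriate sense) on $z$, or more simply, a single construction works for all $z$ in a neighborhood of a given point because only finitely many ``decisions'' (which windows contain a $\lambda$-jump, the rough durations) are involved and each is robust. Covering $\mathcal K$ by finitely many such neighborhoods and taking the minimum of the finitely many resulting lower bounds gives the claim. One technical point to be careful about: one must take $t_0$ large enough that the reachable $U$-interval from every $z\in\mathcal K$ contains the target value $u^\ast$ of a fixed point of $\mathcal V$ — this is why the statement only asserts the bound for $t\geq t_0$ rather than for all $t$.

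The main obstacle is the bookkeeping in the first phase: controlling $U$ precisely requires the oscillation of $X$ near a sign-definite point of $F$ to be genuinely confined (so that the landscape jump rate $(\lambda + yuF'(X))_+$ stays controlled and the $\lambda$-jumps are the only ones that fire), and this must be done while $U$ itself is drifting, possibly through large values, which makes $uF'(X)$ large unless $X$ is kept very close to a critical point of $F$ where $F$ has a definite sign (such points exist by the standing assumption that $F'(x)=0\Rightarrow F(x)\neq0$). So the right move is to oscillate $X$ in a tiny neighborhood of a point in $M(F,+)$ or $m(F,-)$ (resp. $M(F,-)$ or $m(F,+)$) to increase (resp. decrease) $U$, where $|F'|$ is small so the landscape rate is dominated by $\lambda$ over the whole (bounded) time interval; proving that this confinement is realizable with positive probability, uniformly, is the crux, and it parallels the metastability estimates already established in Proposition~\ref{PropHitting}.
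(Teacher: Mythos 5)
Your proposal is correct and follows essentially the same route as the paper: steer $U$ by parking $X$ (via fast balanced velocity reversals approximating zero velocity) near a critical point where $F$ has the desired sign, then steer $X$ and the final velocity, absorb the slack time, and conclude that the PDMP follows such a trajectory with positive probability, with uniformity over $\mathcal K$ obtained by compactness (the paper invokes the Feller property where you use robustness of the construction in a neighborhood). No substantive difference.
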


\begin{proof}
The boundedness of $F$ implies that for $t>0$, there exists a compact set $\mathcal K_2$ such that for all $s<t$ and for all $z_0\in \mathcal K$, if $Z_0=z_0$ then $Z_s\in\mathcal K_2$. Hence results from \cite{BLBMZ3} apply even if our whole state space is not compact. In particular, the process is Feller, and because $\mathcal K$ is compact we only need to prove that, for $t$ large enough, $$\mathbb P\po Z_{t} \in\mathcal V\ |\ Z_0 = z\pf>0$$ for all $z\in\mathcal K$. Let $z_0=(x_0,y_0,u_0)\in\mathcal K$ and $z_1=(x_1,y_1,u_1)\in\mathcal V$.  We proceed in three steps. 

First, suppose that we can choose  in a deterministic way  a piecewise constant velocity $y(s)\in\{-1,0,1\}$ for $s\in[0,t]$, from which $\po x(s),u(s)\pf_{s\in[0,t]}$ is defined by an initial condition and by the ODE 
\begin{equation}\label{EqControlePDMP}
\begin{pmatrix}
\dot{x}\\
\dot{u}
\end{pmatrix}=\begin{pmatrix}
y\\
F(x)
\end{pmatrix}.
\end{equation}	
Let $h_0<h_1<h_2<h_3$ be such that $F(x_0+h_0) = \min F$, $F(x_0+h_1) = \max F$, $F(x_0+h_2)=0$ and $x_0+h_3 = x_1$.
For  $t_0>h_3$ large enough and any $t\geqslant t_0$, we can build a path of length $t$ between $z_0$ and $z_1$ as follows. 
 Given $0<s_1<s_2<t-h_3$, denote
\[\mathcal I = [0,h_0)\cup [h_0+s_1,h_1+s_1)\cup [h_1+s_2,h_2+s_2)\cup [t-(h_3-h_2),t)\]
set $y(s) = 1$ for $s\in\mathcal I$, $y(s) = 0 $ for $s \in [0,t)\setminus \mathcal I$ and $y(t) = y_1$ and let $s\mapsto (x(s),u(s))$ be the solution of the associated system \eqref{EqControlePDMP} with initial condition $(x_0,u_0)$. In particular,
\[x(t) = x_0 + h_0 + (h_1-h_0) + (h_2-h_1) + (h_3-h_1) = x_1\,,\]
and 
\[u(t) = u_0 + \int_0^{h_3} F(x+s)d s + s_1 \max F + (s_2-s_1) \min F \,. \]
For $t_0$ large enough, there exist $s_1<s_2 <t_0$ such that $u(t) = u_1$. This gives a path from $z_0$ to $z_1$ that solves \eqref{EqControlePDMP} with velocities $y(s) \in\{-1,0,1\}$.
%
	
	In a second instance,  we can   choose a deterministic $y(s)\in\{-1,1\}$ such that the solution of the system \eqref{EqControlePDMP} starting from $z_0$ is arbitrarily close to $z_1$ at time $t_0$. To ensure this, we simply approximate the case $y(s)=0$ in the previous step by sufficiently fast and balanced jumps between $-1$ and $1$.  
	
	Finally, we consider the PDMP 
 starting from $z_0$. Since the random jump times have positive density, the PDMP follows arbitrarily closely a trajectory as described in the second step with positive probability. Hence, given any neighbourhood of $z_1$, the PDMP has positive probability to be in it at time $t_0$, which concludes.
 
\end{proof}

\begin{proof}[Proof of Proposition \ref{PropNoyau} in the PDMP case]

	Consider the following vector fields:
	\begin{equation*}
	G_{-1}(x,u)=  \begin{pmatrix}
	-1 \\
	F(x)
	\end{pmatrix}\text{ and } G_1(x,u)= \begin{pmatrix}
	1 \\
	F(x)
	\end{pmatrix}.
	\end{equation*}
	Then their difference is 
	\begin{equation*}
	G_{1}-G_{-1}=\begin{pmatrix}
	2 \\
	0
	\end{pmatrix}
	\end{equation*}
	so that the Lie bracket $[G_{1}-G_{-1},G_1](x,u)$ is 
	\begin{equation*}
	[G_{1}-G_{-1},G_1]= 2\partial_x G_1(x,u)= \begin{pmatrix}
	0 \\
	2 F'(x)
	\end{pmatrix}
	\end{equation*} 
	Since $F$ is not constant and smooth, there exists some $x$ such that $F'(x)\neq 0$, at which point the rank of $\po G_{1}-G_{-1},[G_{1}-G_{-1},G_1]\pf$ is 2.
	
	According to \cite[Theorem 4.4]{BLBMZ3}, it implies there exist a non-empty open set $\mathcal U$, a probability measure $\nu$  and $t_1,c>0$ such that $\forall z \in \mathcal{U}$  ,
	\begin{eqnarray*}
		\mathbb P\po Z_{t_1} \in \cdot \ |\ Z_0 = z\pf & \geqslant & c\ \nu(\cdot ). 
	\end{eqnarray*} 
	Considering $t_0>0$ given by Lemma \ref{LemControlePDMP} with $\mathcal V = \mathcal U$, we get that for any $z\in \mathcal K$, any Borel set $D$ and any $t\geqslant t_0$, 
	\begin{eqnarray*}
		\mathbb P\po Z_{t+t_1} \in D \ |\ Z_0 = z\pf &\geq & \mathbb P\po Z_{t} \in \mathcal U \ |\ Z_0 = z\pf \times \underset{z'\in\mathcal U}\inf \mathbb P\po Z_{t+t_1} \in D\ |\ Z_{t} = z'\pf \\
		& \geq & \po \underset{z'\in\mathcal K}\inf \mathbb P\po Z_{t} \in\mathcal U\ |\ Z_0 = z'\pf\pf c\ \nu(D)
	\end{eqnarray*}
	and Lemma \ref{LemControlePDMP} concludes.
\end{proof}
\section{Proof of the main theorem}\label{SectionPreuve}
In this section we consider either $Z=(X,U)$ or $Z=(X,U,Y)$ such as in Theorem~\ref{ThmPrincipal}, and we call $E$ the state space, namely either $\mathbb S^1\times \RR_+$ or $\mathbb S^1\times \RR_+\times\{-1,1\}$.

The case $\mathcal M=\emptyset$ is a classical consequence of Harris' ergodic theorem:

\begin{proof}[Proof of point 1 of Theorem \ref{ThmPrincipal}]
Let $\kappa_0$ be given by Proposition~\ref{PropLyapunov} and $\kappa \leq \kappa_0$ be small enough so that $\mathbb E\po e^{\kappa |U_0|}\pf < \infty$. Let $t_0$ be large enough for both Propositions~\ref{PropLyapunov} and \ref{PropNoyau} to apply.  Let $\mathcal P_t$ be the Markov kernel on $E$ defined by $\mathcal P_t f(z) = \mathbb E \po f(Z_{t})\ | \ Z_0 = z\pf$. 
Then \cite[Theorem 1.2]{HaiMat}, applied to $\mathcal P_{t_0}$ with $V(z) = \exp(\kappa|z|)$, implies that $\mathcal P_{t_0}$ admits a unique invariant measure $\mu$ and that there exists constants $C>0$ and $\gamma\in(0,1)$ such that for all $n\in\mathbb N$,
\begin{eqnarray*}
d_{TV}\po Law(Z_{n t_0}),\mu\pf  & \leq & C \gamma^{n} \mathbb E \po e^{\kappa|U_0|}\pf.
\end{eqnarray*}

 By the semi-group property, for all $t\geq 0$, $\mu \mathcal P_t \mathcal P_{t_0} =  \mu \mathcal P_{t_0}\mathcal P_t = \mu \mathcal P_t$, so that $\mu \mathcal P_t$ is invariant for $\mathcal P_{t_0}$ and hence, by uniqueness,  $\mu\mathcal P_t = \mu$. In other words, $\mu$ is invariant for $\mathcal P_t$ for all $t\geq 0$, and in particular 
\begin{eqnarray*}
d_{TV}\po Law(Z_t),\mu\pf & = & d_{TV}\po Law(Z_t),\mu \mathcal P_{t-\lfloor t/t_0\rfloor t_0} \pf \\
& \leq &  d_{TV}\po Law(Z_{\lfloor t/t_0\rfloor t_0}),\mu\pf \\
  & \leq & C \gamma^{-1} e^{-t |\ln \gamma |/t_0 }\mathbb E \po e^{\kappa|U_0|}\pf.
\end{eqnarray*}

Finally, the controllability results of Section~\ref{SectionDensite} (Lemmas \ref{densiteDiff} and \ref{LemControlePDMP}) imply that $\mu$ has full support.
\end{proof}

The rest of the section is devoted to the proof of the localization of the processes  when $\mathcal M\neq \emptyset$.

\begin{prop}\label{PropLocal}
Suppose $m(F,+)\neq \emptyset$. Then there exist $p>0$ and $M>0$ (which does not depend on $Z_0$) such that  if $X_0=x_0 \in m(F,+)$ and $U_0\geq M$, then
\begin{eqnarray*}
\mathbb P\po X_t \underset{t\rightarrow \infty }\longrightarrow x_0\pf &\geqslant & p. 
\end{eqnarray*}
\end{prop}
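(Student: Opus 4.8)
The plan is to show that, starting from $X_0 = x_0 \in m(F,+)$ with $U_0 = u_0$ large, there is a positive probability that $X$ never leaves the small neighborhood $I_{x_0}^\delta$ of $x_0$, and that on this event $X_t \to x_0$. The key point is that while $X$ stays in $I_{x_0}^\delta$ the auxiliary variable $U$ only increases: indeed $F > 0$ on $I_{x_0}^\delta$ (since $x_0 \in m(F,+)$ and $\delta$ is small), so $\dot U_t = F(X_t) \geq \min_{I_{x_0}^\delta} F =: c_0 > 0$, hence $U_t \geq u_0 + c_0 t$ as long as $X$ has not exited. Thus confinement is self-reinforcing: the longer $X$ stays, the stronger the restoring force $U_t F'$ pulling it back to $x_0$.

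First I would reduce to an escape-probability estimate. Let $T$ be the first exit time of $X$ from $I_{x_0}^\delta = [z_l, z_r]$, and decompose the trajectory into successive attempts to escape from the bottom $x_0$ up to the boundary $\partial I_{x_0}^\delta$. More precisely, following the structure already set up in Section~\ref{SectionHitting}, once $X$ is near $x_0$ it must cross an intermediate level set $B_{x_0}^{\delta}$ before reaching $C^\delta \supset \partial I_{x_0}^\delta$; by Inequality~\eqref{EqHittingProba} of Proposition~\ref{PropHitting}, applied with $g$ the (random but, on the good event, lower-bounded) function $t \mapsto U_t$ and with the level $\eta = \delta$, the probability of reaching $C^\delta$ before falling back to $A = \{x_0\}$ in a single attempt, starting from $B_{x_0}^\delta$ at a time when $U \geq M_n$, is at most $K M_n e^{-\delta M_n}$. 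Between the $(n-1)$-th and $n$-th attempt, $U$ has increased: the time for one failed excursion (down to $A$, then back up to $B_{x_0}^\delta$) is bounded below in distribution by $R_n\sqrt\delta$ with $R_n$ a copy of $R$ (Inequality~\eqref{EqHittingR}), so after $n$ excursions $U \geq M + c_0 \sqrt\delta (R_1 + \dots + R_n)$. Hence, conditionally, the probability that $X$ ever escapes is bounded by $\sum_{n\geq 1} K M_n e^{-\delta M_n}$ where $M_n = M + c_0\sqrt\delta(R_1 + \dots + R_n)$ grows linearly (a.s.) in $n$; taking expectations and choosing $M$ large, this sum can be made $< 1$, indeed $< 1/2$, giving escape probability at most $1/2$, so $\mathbb P(T = \infty) \geq 1/2 =: p$.

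Next, on the event $\{T = \infty\}$ I would upgrade ``$X$ stays in $I_{x_0}^\delta$ forever'' to ``$X_t \to x_0$''. Since $U_t \to \infty$ on this event, I would run the same metastability argument at finer and finer scales: for each $\eta \in (0,\delta]$, once $U$ exceeds the corresponding threshold (which it eventually does), the probability that $X$ ever escapes the smaller set $I_{x_0}^\eta$ from that time on is, by the same computation, at most some $\varepsilon(\eta) \to 0$ as the starting $U$-value $\to\infty$; by Borel–Cantelli along a sequence $\eta_k \downarrow 0$ one concludes that $X$ is eventually trapped in $I_{x_0}^{\eta_k}$ for every $k$, i.e. $X_t \to x_0$. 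For the diffusion this requires the strong Markov property and the comparison Lemma~\ref{compar}; for the PDMP the analogous confinement estimate follows from the trajectory construction of Section~\ref{SectionHittingPDMP}. Finally, the constant $M$ and probability $p$ produced this way manifestly do not depend on the initial velocity $Y_0$ (the bounds of Proposition~\ref{PropHitting} are uniform in $Y_0$) nor on anything beyond $u_0 \geq M$.

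\textbf{Main obstacle.} The delicate point is making the ``self-reinforcement'' rigorous: the force $g(t) = U_t$ is itself random and, a priori, one only controls it from below \emph{on the event that $X$ has stayed in $I_{x_0}^\delta$}, so there is a circularity between the event one wants to estimate and the lower bound on $g$ one uses to estimate it. The clean way around this is to define the excursion decomposition and the increasing thresholds $M_n$ pathwise, couple with an auxiliary process driven by the deterministic lower bound $M + c_0\sqrt\delta \sum R_i$ via the comparison principle (Ikeda–Watanabe for the diffusion, the explicit jump-rate monotonicity for the PDMP, exactly as in the proof of \eqref{EqHittingProba}), and only then take expectations over the $R_i$'s. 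One also needs the summability $\sum_n K M_n e^{-\delta M_n} < \infty$ a.s., which holds because $M_n$ grows at least linearly in $n$ almost surely (the $R_i$ being i.i.d.\ positive with finite, in fact exponential, moments, their partial sums grow linearly by the law of large numbers), and the expectation of the sum is finite and $\to 0$ as $M\to\infty$ by dominated convergence.
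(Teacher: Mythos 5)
Your proposal is correct in substance and rests on the same two ingredients as the paper (the metastability bounds of Proposition~\ref{PropHitting} and the excursion decomposition into attempts $A\to B^\eta\to C^\eta$ or back to $A$), but the architecture is genuinely different. The paper runs a single induction over a pre-tuned deterministic sequence of shrinking scales $\eta_j=\frac{4\ln(1+j)}{1+j}\wedge\delta$: each stage lasts at least a fixed time $c$, which gives the deterministic floor $U_t\geq M+j$ after stage $j$; the number of attempts per stage is controlled in expectation via Cram\'er's theorem for the $R_i$'s; confinement and convergence are obtained simultaneously as the positivity of an infinite product. You instead split the argument in two: first, confinement at the fixed scale $\delta$ with probability $\geq 1/2$, tracking the growth of $U$ through the random excursion durations $R_i\sqrt\delta$ and bounding the total escape probability by $\mathbb E\big[\sum_n K M_n e^{-\delta M_n}\big]$ (a valid union bound, provided you write it as $\sum_n\mathbb E\big[\mathbbm 1_{\{n\text{-th attempt occurs}\}}KM_ne^{-\delta M_n}\big]$ and use that $\mathbb E(e^{-aR})<1$, which holds since $R$ is not a.s.\ zero); second, an almost-sure upgrade to $X_t\to x_0$ on the confinement event via Borel--Cantelli over a sequence $\eta_k\downarrow 0$, using that $U_t\to\infty$ there. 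Your version is more modular and spares the tuning of $\eta_j$ against $j$, at the cost of losing the convergence rate: the paper's deterministic schedule is what yields the $\po\frac{\ln t}{t}\pf^{1/n}$ speed noted in the remark following its proof, whereas a bare Borel--Cantelli gives no rate. One reassurance on the ``circularity'' you flag as the main obstacle: it is in fact benign here, because $F>0$ on the whole component $I_{x_0}^\delta$, so $U$ is nondecreasing up to the exit time of $I_{x_0}^\delta$ on the entire probability space (whatever the outcome of the attempt); hence the lower bound $g\geq M_n$ needed to invoke \eqref{EqHittingProba} holds deterministically up to the relevant stopping time, and the comparison/conditioning goes through exactly as in the paper's own use of Proposition~\ref{PropHitting}.
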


\begin{proof}
For $j\geqslant 0$, define
\begin{equation*}
\eta_j = \frac{4\ln(1+j)}{1+j}\wedge \delta , 
\end{equation*}
set $c=\max\{ \frac{1}{F(x)},\ x\in m(F,+)\}$ and $S_0=0$ and define by induction the following stopping times:
\begin{eqnarray*}
\tau_{j+1}&=&\inf\left\{t>S_j \, :\,  X_t\in C^{\eta_{j+1}}\right\},\\
\tilde{S}_{0,j}&=& S_j,\\
\tilde{T}_{k,j}&=&\inf \left\{t>\tilde{S}_{k-1,j} \, : \,  X_t\in B^{\eta_{j+1}}\right\}\wedge (\tilde{S}_{k-1,j}+c)\wedge \tau_{j+1},\; k\geqslant 1,\\
\tilde{S}_{k,j}&=&\inf \left\{t>\tilde{T}_{k,j} \, :\,  X_t\in A\right\}\wedge \tau_{j+1},\; k\geqslant 1\,,
\end{eqnarray*}
and $S_{j+1}=\tilde{S}_{N_j,j}$ with 
\[N_j \ = \ \inf\left\{k\in\mathbb N \, : \,  \tilde{S}_{k,j}\geqslant S_j +c \text{ or } \tilde{S}_{k,j}=\tau_{j+1}\right\}\,.\]

Let us give some intuition on these definitions. The connected component of $\po C^{\eta_j}\pf^c$ that contains $x_0$ is a neighbourhood of $x_0$ whose diameter goes to 0 as $j$ goes to $\infty$. At time $\tau_j$, the process has escaped from this neighbourhood. For $t\leq \tau_j$, the process makes possibly many oscillations near $x_0$.  When such an oscillation is large enough for the process to reach $B^{\eta_j}$ (this is at a time $\widetilde T_{k,j}$ for some $k$), we consider  this is the beginning of an attempt to leave $\po C^{\eta_j}\pf^c$. If this attempt fails, the process falls back to $x_0$ (this is $\widetilde S_{k,j}$). While $X$ makes those attempts to escape, time goes by, so that $U$ increases: after a time $c$, $U$ has increased at least by 1. Next time $X$ falls back to $x_0$ (this is $S_{j+1}$), we shrink the neighbourhood, namely from then we consider that the process escapes if it reaches $C^{\eta_{j+1}}$. From $S_j$ to $S_{j+1}$, there have been $N_j$ attempts to leave. The sequence $\eta$ is scaled so that there is in fact a positive probability that the process never escape from the shrinking neighbourhood that collapses at infinity to $\{x_0\}$.

Let us write these ideas more precisely. Note that as long as $S_{j+1}<\tau_{j+1}$, 
\begin{equation*}
S_{j+1}-S_j\geqslant c \hspace{21pt}\text{ and }\hspace{21pt} U_t\geqslant M+j 
\end{equation*}
for $t\geqslant S_{j}$. We take $M$ large enough so that $\po M+j\pf \eta_j>1$ for all $j\in\mathbb N$. Therefore, from Proposition \ref{PropHitting}, for all $k\geq 1$,
\begin{eqnarray*}
\mathbb{P}(\tilde{S}_{k,j}=\tau_{j+1}\vert \; \tilde{T}_{k,j}<\tau_{j+1})
&\leqslant& K(j+M)e^{-(j+M)\eta_{j}}.
\end{eqnarray*}
It implies that $\Big(\mathbbm{1}_{\tilde{S}_{(i\wedge N_j) ,j}<\tau_{j+1}}+(i\wedge N_j)K(j+M)e^{-(j+M)\eta_{j}}\Big)_{i\geqslant 0}$ is a submartingale. Thus,
\begin{eqnarray}\label{relBLOCage}
\mathbb{P}(S_{j+1}<\tau_{j+1}\vert\;  S_j<\tau_j)&=& 1+ \mathbb{E}(\mathbbm{1}_{S_{j+1}<\tau_{j+1}}-\mathbbm{1}_{S_{j}<\tau_{j}}\vert\;  S_j<\tau_j)\notag\\
&\geqslant & 1- K(j+M)e^{-(j+M)\eta_{j+1}}\mathbb{E}(N_j\vert  \; S_j<\tau_j).
\end{eqnarray}
From Proposition \ref{PropHitting}, we have 
\begin{equation*}
\tilde{S}_{k+1 ,j}-\tilde{S}_{k,j}\overset{sto}{\geqslant}  \eta_j  R.
\end{equation*}
Hence, considering a sequence  $(R_i)_{i\in\NN}$  of i.i.d random variables distributed like $R$
\begin{eqnarray*}
N_j&\overset{sto}{\leqslant}&\inf \left\{n\geqslant 1 \, :    \; \eta_j\sum_{i=1}^n R_i \geqslant c\right\}\\
&\leqslant & \left\lceil\frac{2c}{\mathbb{E}(R_1)\eta_j}\right\rceil + \inf \left\{n\geqslant 1 \, : \; \frac{1}{n}\sum_{i=1}^n R_i\geqslant \frac{\mathbb{E}(R_1)}{2}\right\}.
\end{eqnarray*}

Since $R$ is a positive r.v. with an exponential moment, from Cramer's Theorem (see e.g \cite[Chapter 2.4]{LDPbook} with the exercise 2.28 in it), there exist $c_1,c_2>0$ such that for all $n\geq 0$,
\[\mathbb P\po \frac{1}{n}\sum_{i=1}^n R_i\leqslant \frac{\mathbb{E}(R_1)}{2}\pf \leq c_1 e^{-c_2 n}.\] 
Hence, applying the general formula $\mathbb E(J) = \sum_{k\in\mathbb N} \mathbb P(J\geqslant k)$ for a random variable $J$ on $\mathbb N$, we get
\[\mathbb{E}(N_j\vert  S_j<\tau_j)\ \leqslant \  \left\lceil\frac{2c}{\mathbb{E}(R_1)\eta_j}\right\rceil +\sum_{n\geqslant 1} \mathbb{P}\po \frac{1}{n}\sum_{i=1}^n R_i\leqslant \frac{\mathbb{E}(R_1)}{2}\pf 
\ \leqslant \  \frac{K'}{\eta_j}\]
for some constant $K'$ which does not depend on $j$, nor $M$. Thus  \eqref{relBLOCage} is now
\begin{eqnarray*} 
\mathbb{P}(S_{j+1}<\tau_{j+1}\vert\;  S_j<\tau_j)
&\geqslant & 1- \frac{K'K}{ \eta_j}(j+M)e^{-(j+M)\eta_{j+1}}.
\end{eqnarray*}
Take $M$ large enough so that the right-hand side is positive for all $j\in\mathbb N$. Then by induction 
\begin{eqnarray*}
	\mathbb{P}(S_{j+1}<\tau_{j+1})&=& \mathbb{P}(S_{j+1}<\tau_{j+1} \mid S_j<\tau_{j}) \ \mathbb{P}(S_j<\tau_{j})\notag\\
	&\geqslant&\prod_{i=0}^{j+1}\po  1- \frac{K'K}{\eta_i}(i+M)e^{-(i+M)\eta_{i+1}}\pf .
\end{eqnarray*}
As $(\{S_j<\tau_{j}\})_{j\geqslant 1}$ is a decreasing family of events,
\begin{eqnarray*}
\mathbb{P}(S_j<\tau_{j} \ \forall j\in\mathbb N)&=&\lim_{j\rightarrow \infty}\mathbb{P}(S_{j}<\tau_{j})\notag\\
&\geqslant&\prod_{j=0}^{\infty}\po  1- \frac{K'K}{\eta_j}(j+M)e^{-(j+M)\eta_{j+1}}\pf \\
&=& \exp\po\sum_{j\geqslant 0} \ln\po  1- \frac{K'K}{\eta_j}(j+M)e^{-(j+M)\eta_{j+1}}\pf\pf.
\end{eqnarray*} 
For $j$ large enough,
\begin{eqnarray*}
 \frac{K'K}{\eta_j}(j+M)e^{-(j+M)\eta_{j+1}}  &\leqslant & \frac{1}{j^2}\,,
\end{eqnarray*} 
(where we used that $\eta_j = 4\ln(1+j)/(1+j)$  for $j$ large enough so that the right-hand-side is equivalent to $K'K/(j^2\ln j)$), and
\begin{eqnarray*}
\ln\po 1- \frac{K'K}{\eta_j}(j+M)e^{-(j+M)\eta_{j+1}} \pf &\geqslant & -\frac{1}{2 j^2}
\end{eqnarray*} 
which means $\mathbb{P}(S_j<\tau_{j} \ \forall j\in\mathbb N) >p>0$ where $p$ does not on depend $Z_0$. Yet,
\[\{S_j<\tau_{j} \ \forall j\in\mathbb N\} = \{ \forall j\in\mathbb N,\ \forall s\geq S_j,\ X_s\in I_{x_0}^{\eta_j}\}\]
and the $S_j$'s are all a.s. finite, which concludes.
\end{proof}
\bigskip

\noindent\textbf{Remark :} The proof even  provides an estimation of the speed of convergence. Indeed we can see that $S_{j+1} \overset{sto}{\leqslant} S_j + c+ \delta R $, so that the $S_j$'s grow linearly to infinity.  From the non-degeneracy assumption on $F$, there exist $n\in\mathbb N$ and $c>0$ such that the diameter of $I^{\eta_j}_{x_0}$ is less than $c \eta_j^{\frac1n}$, depending on the first derivative of $F$ at $x_0$ to be non-zero (if $F$ is a Morse function, $n=2$). It means when there is convergence, it occurs at least at a speed of order $\po \frac{\ln t}{t}\pf^{\frac1n}$. 
\begin{proof}[Proof of point 2 of Theorem \ref{ThmPrincipal}:]
First note that by changing $U$ and $F$ to their opposites, Proposition \ref{PropLocal} also says that if $M(F,-)\neq \emptyset$ then there exist $p,M>0$ such that if $U_0<-M$ and $X_0\in  M(F,-)$ then $X_t$ converges to $x_0$ with probability at least $p$. 

	For $M>0$ large enough, $\varepsilon>0$ small enough and $x\in \mathcal M$, let
	\begin{eqnarray*}
	\mathcal{V}_x^\varepsilon &=& \left\{z'\in E\, :\ |x'-x|<\varepsilon \text{ and } u' \times sign\po F(x)\pf > M\right\}\\
	\mathcal{V}^\varepsilon &=& \underset{x\in\mathcal M}\bigcup \mathcal{V}^\varepsilon_x 	.
	\end{eqnarray*}
  When $\varepsilon$ is fixed, for $M$ large enough, if the process starts in $\mathcal V^\varepsilon_x$, from Inequality \eqref{EqHittingProba} (which is written for $x\in m(F,+)$ but by symmetry, again, also holds for $x\in M(F,-)$) it has a probability at least $\frac12$ to hit $\mathcal V^0_x$ before leaving $\mathcal V^{2\varepsilon}_x$. Then from Proposition \ref{PropLocal}, $X$ has a probability at least $p$ to converge to $x$. By the Markov property, it is then sufficient to prove that the hitting time of $\mathcal V^\varepsilon$ is almost surely finite in order to obtain that $X$ will almost surely converge to some point of $\mathcal M$.
	
	Denoting by $\tau_D$ the first hitting time of a set $D$ and $	\mathcal K = \{z\in E\, : \ |u|\leqslant M \}$,
	let us prove that for all $z_0\in E$
	\begin{equation*}
	\mathbb{P}(\tau_{\mathcal{V}^\varepsilon}\wedge \tau_{\mathcal K} <\infty\vert Z_0=z_0)=1.
	\end{equation*}
To do so, consider the case $u_0>M$ (as before, the case $u_0<-M$ is obtained by symmetry). Consider a smooth $2\pi$-periodic function  $\tilde F$ such that  $\tilde F(x) = F(x)$ for all $x\in  \{x'\in\mathbb S^1\, :\ |x'-x_0|>\varepsilon \ \forall x_0 \in m(F,+)\}$ and such that all the local minima of $\tilde F$ are negative  (i.e. $m(\tilde F,+) = \emptyset$). Let $(\tilde Z_t)_{t\geqslant 0}$ be the process constructed like $(  Z_t)_{t\geqslant 0}$ but with the function $\tilde F$ rather than $F$. In particular, in the diffusion, we use the same Brownian motion in the SDE \eqref{EqOrigin31} for both processes, and in the PDMP case we use the same sequence of i.i.d. exponential variables. That way, $Z_t = \tilde Z_t$ up to time $\tau_{\mathcal{V}^\varepsilon}\wedge \tau_{\mathcal K} $. By Lemma \ref{LemmAtteinte}, the hitting time of $\mathcal K$ by $\tilde Z$, which is greater than $\tau_{\mathcal{V}^\varepsilon}\wedge \tau_{\mathcal K} $, is almost surely finite.

%
	
		On the other hand, by Lemmas \ref{LemControlePDMP} (for the PDMP) and  \ref{densiteDiff} (for the diffusion), there exists $t_0>0$ such that for all $x\in\mathcal M$,
	\begin{eqnarray*}
		\underset{z\in\mathcal K}\inf \mathbb P\po Z_{t_0} \in\mathcal V^\varepsilon_x\ |\ Z_0 = z\pf & > & 0. 
	\end{eqnarray*}
	It therefore follows that for any $z\in E$, 
	\begin{equation*}
	\mathbb{P}(\tau_{\mathcal{V}^0}<\infty\vert Z_0=z)=1
	\end{equation*}
	and moreover 
		\begin{equation*}
	\mathbb{P}(X_{\tau_{\mathcal{V}^0}}= x)>0
	\end{equation*}
	for all $x\in \mathcal M$.  Proposition \ref{PropLocal} concludes.
\end{proof}

\section*{Acknowledgement}

The first author thanks Olivier Raimond for a discussion that led to this paper. The second author acknowledge financial support from French ANR PIECE. Finally, both authors thank the anonymous referees for their comments, and acknowledge financial support from the Swiss National Science Foundation, Grant 200021\_163072. This paper was written when both authors were affiliated to the University of Neuchâtel.


\end{document}